\DeclareMathOperator*{\esssup}{esssup}
\DeclareMathOperator*{\essinf}{essinf}
\newtheorem{thm}{Theorem}[section]
\newtheorem{lem}[thm]{Lemma}
\newtheorem{cor}[thm]{Corollary}
\newtheorem{athm}{Theorem}
\theoremstyle{definition} \newtheorem{rk}[thm]{Remark}
\theoremstyle{definition} \newtheorem{df}[thm]{Definition}
\begin{document}

\title{Matrix weighted Kolmogorov-Riesz's compactness theorem}

\author{Shenyu Liu, Dongyong Yang and  Ciqiang Zhuo}

\address{Shenyu Liu, School of Mathematical Sciences\\
 Xiamen University\\
 Xiamen 361005, China}

\email{shenyuliu@stu.xmu.edu.cn}

\address{Dongyong Yang, School of Mathematical Sciences\\
 Xiamen University\\
 Xiamen 361005, China}

\email{dyyang@xmu.edu.cn}

\address{Ciqiang Zhuo(Corresponding author), School of Mathematics and Statistics\\
 Hunan Normal University\\
 Changsha, Hunan 410081, China}

\email{cqzhuo87@hunnu.edu.cn}

\makeatletter
\@namedef{subjclassname@2020}{\textup{2020} Mathematics Subject Classification}
\makeatother
\subjclass[2020]{46B50, 46E40, 42B35, 46E30}

\date{\today}

\keywords{Kolmogorov-Riesz theorem, matrix weights, totally bounded, metric measure spaces, variable exponent Lebesgue spaces}

\begin{abstract}
In this paper, several versions of the Kolmogorov-Riesz compactness theorem in weighted Lebesgue spaces with matrix weights are obtained. In particular, when the matrix weight $W$ is in the known $A_p$ class, a characterization of totally bounded subsets in $L^p(W)$ with $p\in(1, \infty)$ is established.
\end{abstract}

\maketitle

\section{Introduction}
In this paper, we investigate totally bounded sets in matrix weighted Lebesgue spaces, from which one can obtain corresponding compactness criteria via the Hausdorff criterion for compactness, that is, a set is precompact if and only if it is complete and totally bounded.\par

In classical Lebesgue spaces $L^p$, the characterization of precompact sets was given by the celebrated Kolmogorov-Riesz theorem (see \cite{HH}) which was first discovered by Kolmogorov \cite{K} in $L^p([0,1])$ for $p\in (1,\infty)$. Subsequently, Tamarkin \cite{Ta} extended the result to the case in which the underlying space can be unbounded, with an additional condition related to the behaviour at infinity. Tulajkov \cite{Tu} showed that Tamarkin's result was also true when $p=1$. At the same time, Riesz \cite{Ri} independently proved a similar result. Since then, compactness criteria of subsets in Lebesgue spaces have been studied and applied in various settings, e.g. see \cite{S,WY} for some improvements and applications on Kolmogorov-Riesz's theorem, and see \cite{Ra,GM2,GR,Ba,BG,AU} for a series of works on compactness criteria in variable exponent function spaces.

In particular, Tsuji \cite{T} showed that Kolmogorov-Riesz's theorem is true in $L^p(\mathbb{R})$ for $p\in (0,1)$ and his method has been applied by many authors, e.g. \cite{XYY,GZ,COY}. Moreover, based on the Arzel\'{a}-Ascoli theorem (see \cite{GN}), the authors in \cite{Si,GN} established compactness criteria in Lebesgue-Bochner spaces. In addition, the authors in \cite{Kat,Kr} applied the so-called Lebesgue-Vitali's theorem and established compactness criteria in $L^0(m)$, the space of all Lebesgue measurable functions on $\mathbb{R}^n$ that are finite almost everywhere, where $m$ denotes the Lebesgue measure.\par

Recently, Hanche-Olsen--Holden \cite{HH} seminally showed that both the Arzel\'{a}-Ascoli theorem and Kolmogorov-Riesz's theorem are consequences of a simple lemma on compactness in metric spaces via a finite dimension argument. Inspired by the method used in \cite{HH}, Clop--Cruz \cite{CC} first gave a compactness criterion in scalar weighted Lebesgue spaces $L^p(\omega)$ for $p\in (1,\infty)$ with a weight $\omega\in A_p$. Their result was then improved by Guo-Zhao in \cite{GZ}, in which they gave the following compactness criterion in $L^p(\omega)$ for $p\in(0,\infty)$ with $\omega\in L^1_{\rm{loc}}(\mathbb{R}^n)$.
\begin{athm}\label{thm-GZ}
Let $0<p<\infty$ and $\omega\in L^1_{\rm{loc}}(\mathbb{R}^n)$ be a nonnegative function. A subset $\mathcal{F}$ of $L^p(\omega)$ is totally bounded if the following conditions hold:
\begin{enumerate}
\item[(a)] $\mathcal{F}$ is bounded, i.e. $\sup\limits_{f\in \mathcal{F}}\|f\|_{L^p(\omega)}<\infty;$
\item[(b)] $\mathcal{F}$ uniformly vanishes at infinity, that is, $$\lim_{R\to \infty}\sup_{f\in \mathcal{F}}\|f\chi_{B^c(0,R)}\|_{L^p(\omega)}=0;$$
\item[(c)] $\mathcal{F}$ is equicontinuous, that is, $$\lim_{r\to 0}\sup_{f\in \mathcal{F}}\sup_{y\in B(0,r)}\|\tau_yf-f\|_{L^p(\omega)}=0.$$
\end{enumerate}
Here, $\tau_y$ denotes the translation operator: $\tau_yf(x):=f(x-y)$.
\end{athm}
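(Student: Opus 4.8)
The plan is to run the metric-space reduction lemma of Hanche-Olsen--Holden \cite{HH}, which asserts that a subset of a metric space is totally bounded as soon as, for every $\varepsilon>0$, it is contained in the $\varepsilon$-neighbourhood of some totally bounded set. It therefore suffices to produce, for each $\varepsilon>0$, a totally bounded family $\mathcal{G}_\varepsilon\subseteq L^p(\omega)$ with the property that every $f\in\mathcal{F}$ lies within $\varepsilon$ of $\mathcal{G}_\varepsilon$. First I would invoke condition (b) to fix a radius $R$ making $\sup_{f\in\mathcal{F}}\|f\chi_{B^c(0,R)}\|_{L^p(\omega)}$ arbitrarily small, which reduces the problem to approximating the truncations $f\chi_{B(0,R)}$.

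The core of the argument is a finite-dimensional approximation that avoids the sup-norm. Let $\mathcal{Q}_r$ be the grid of half-open cubes of side $r$ and, for $f\in\mathcal{F}$, let $A_rf:=\sum_{Q\in\mathcal{Q}_r}\big(|Q|^{-1}\int_Q f\big)\chi_Q$ be the conditional expectation of $f$ onto $\mathcal{Q}_r$. Discarding the cubes disjoint from $B(0,R)$ turns $A_r$ into an operator $P_{r,R}$ whose range lies in the finite-dimensional space $V:=\mathrm{span}\{\chi_Q:\ Q\in\mathcal{Q}_r,\ Q\cap B(0,R)\ne\emptyset\}$. Here the hypothesis $\omega\in L^1_{\mathrm{loc}}(\mathbb{R}^n)$ is used decisively: it forces $\omega(Q)<\infty$, hence $\chi_Q\in L^p(\omega)$ and $\dim V<\infty$. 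This is precisely the device that lets one drop the $A_p$ assumption of Clop--Cruz \cite{CC}, for one no longer passes through $L^\infty$ and the Arzel\`a--Ascoli theorem, which would demand local integrability of a negative power of $\omega$. I would then set $\mathcal{G}_\varepsilon:=P_{r,R}(\mathcal{F})$; since bounded subsets of a finite-dimensional space are totally bounded, it remains only to verify that $\mathcal{G}_\varepsilon$ is bounded and $\varepsilon$-close to $\mathcal{F}$.

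Both verifications reduce to the single estimate $\|f-A_rf\|_{L^p(\omega)}\lesssim\sup_{|y|\le r\sqrt{n}}\|\tau_yf-f\|_{L^p(\omega)}$. When $p\ge1$ this is immediate: the pointwise bound $|A_rf(x)-f(x)|\le r^{-n}\int_{(-r,r)^n}|f(x+h)-f(x)|\,dh$ combined with Minkowski's integral inequality gives it at once, condition (c) renders the right-hand side uniformly small over $\mathcal{F}$, and together with the truncation from (b) this yields $\sup_{f\in\mathcal{F}}\|f-P_{r,R}f\|_{L^p(\omega)}\to0$; boundedness of $\mathcal{G}_\varepsilon$ then comes from (a), and the Hanche-Olsen--Holden lemma closes the case. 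The main obstacle is the range $0<p<1$, where $\|\cdot\|_{L^p(\omega)}$ is only a quasi-norm: Minkowski's integral inequality is no longer available, and in fact the averaging $A_r$ can inflate the quasi-norm of a tall, thin spike, so the pointwise operator bound above genuinely breaks down. I expect this to be the crux, and would resolve it by adapting Tsuji's method \cite{T} to the quasi-Banach setting, establishing the uniform finite-dimensional approximation $\sup_{f\in\mathcal{F}}\|f-P_{r,R}f\|_{L^p(\omega)}\to0$ directly from the equicontinuity hypothesis (c); with that estimate in hand, the finite-dimensional step and the Hanche-Olsen--Holden lemma apply exactly as before.
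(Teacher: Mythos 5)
Your proposal is not really being measured against a proof in this paper: Theorem \ref{thm-GZ} is quoted from Guo--Zhao \cite{GZ} and used here as a black box (e.g.\ inside the proof of Theorem \ref{thm-mwKRL}). The closest in-house analogue is the proof of Theorem \ref{thm-KRLrhov}, and for $p\ge 1$ your argument coincides with it in all essentials: truncate via condition $(b)$, project onto averages over a finite grid of cubes (the paper uses the dyadic cubes tiling $R_m=[-2^m,2^m)^n$, which sidesteps the boundary cubes your ball $B(0,R)$ creates, but that is a detail), control $f-A_rf$ by the modulus of continuity in $(c)$, and finish with the finite-dimensionality of the range; whether one runs Minkowski's integral inequality or, as the paper does, Jensen plus Fubini at the level of the modular is immaterial. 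Your observation that $\omega\in L^1_{\rm loc}$ alone guarantees $\chi_Q\in L^p(\omega)$, so that no $A_p$-type hypothesis is needed, is exactly the point of this finite-dimensional route.

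The genuine gap is the case $0<p<1$, which you correctly identify as the crux and then leave unresolved: ``adapting Tsuji's method'' \cite{T} is a citation, not an argument, and since the averaging operator $A_r$ really does fail the estimate $\|f-A_rf\|_{L^p(\omega)}\lesssim\sup_{|h|\le r\sqrt n}\|\tau_hf-f\|_{L^p(\omega)}$ in this range, your key inequality has no substitute in the proposal as written. The standard repair (in effect what Tsuji and Guo--Zhao do) is to abandon the average in favour of a well-chosen point value: for each cube $Q$ of the grid, Fubini gives
\begin{equation*}
\frac{1}{|Q|}\int_Q\Big(\int_Q|f(x)-f(y)|^p\omega(x)\,dx\Big)dy
=\frac{1}{|Q|}\int_Q\int_Q|f(x)-f(y)|^p\omega(x)\,dx\,dy,
\end{equation*}
so by Chebyshev there is a point $y_Q=y_Q(f)\in Q$ with $\int_Q|f(x)-f(y_Q)|^p\omega(x)\,dx$ bounded by the right-hand side; summing over $Q$ and substituting $y=x-h$ controls $\sum_Q\int_Q|f(x)-f(y_Q)|^p\omega(x)\,dx$ by $c_n\sup_{|h|\le r\sqrt n}\|\tau_hf-f\|^p_{L^p(\omega)}$. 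The map $f\mapsto\sum_Qf(y_Q(f))\chi_Q$ is no longer linear, but its image of $\mathcal{F}$ still lies in your finite-dimensional space $V$ and is bounded there by $(a)$ and the $p$-triangle inequality, so the rest of your scheme (run in the metric $d(f,g)=\|f-g\|^p_{L^p(\omega)}$, for which the neighbourhood lemma you invoke applies verbatim) goes through. Without this, or some equivalent device, your proof covers only $p\ge 1$.
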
\par

There is a natural question whether Theorem \ref{thm-GZ} can be extended to the setting of matrix weights. As a natural vector-valued generalization of scalar Muckenhoupt $A_p$ weights, the theory of matrix weights was first introduced by Bloom \cite{B1,B2} in 1981. Then the theory was pushed forward through the seminal work of Nazarov-Treil-Volberg \cite{NT,V,TV}, Christ-Goldberg \cite{CG,G}, and Frazier-Roudenko \cite{R,FR} in the late 1990s, that was arose from problems in the theory of stationary processes, the theory of Toeplitz operators and multivariable elliptic PDEs. From then on, harmonic analysis with matrix weights have been considered by many authors in various directions. For the references, we refer to \cite{CIM,CIMPR} for recent developments on matrix weights, \cite{NPTV,HPV,I} for the matrix $A_2$ conjecture related to the sharp norm estimates for singular operators, and \cite{CMR,DLY} for some applications of matrix weights.\par

Matrix weights share many properties with scalar weights, for example, the definition of matrix $A_p$ weights due to Frazier-Roudenko \cite{R,FR} seems to be an intuitive extension of the definition of scalar $A_p$ weights and the Hilbert transform is bounded on $L^p(W)$ if and only if $W\in A_p$. However, due to the non-commutativity in the matricial setting, many techniques of the classical harmonic analysis fail to generalize to the case of vector-valued functions with matrix weights, so the vector-valued case cannot be easily reduced to the scalar case. For example, in the setting of matrix weights, a suitable theory of weak-type spaces $L^{p,\infty}(W)$ is unknown.\par

In this paper, we obtain several generalizations of Theorem \ref{thm-GZ} in weighted Lebesgue spaces with matrix weights. To be precise, we first obtain a Kolmogorov-Riesz theorem in the weighted variable Lebesgue space $L^{p(\cdot)}(\rho)$ on $(\mathbb R^n, |\cdot|, m)$, where $\rho=\{\rho_x\}_{x\in\mathbb R^n}$ is a family of norms on $\mathbb C^d$, and another version of compactness criterion in $L^p(W)$ when $p\in (0,\infty)$ and $W$ is a matrix weight on $\mathbb R^n$. We also establish a  Kolmogorov-Riesz theorem in $L^p(\rho, \mu)$ with $p\in[1, \infty)$ on metric measure spaces $(X, d, \mu)$, and obtain an equivalent characterization of precompact subsets in $L^p(W)$ for $p\in (1, \infty)$ and $W$ in $A_p$ class on $\mathbb R^n$ as an application.\par

We would like to emphasize that due to the special structure of matrix weighted Lebesgue spaces, neither matrix weighted Lebesgue spaces $L^p(W)$ nor weighted Lebesgue spaces $L^p(\rho)$ are in the framework of (quasi-)Banach function spaces in \cite{GR,CGO,GZ} or Lebesgue-Bochner spaces in \cite{Si,GN}. For example, in the case of vector-valued functions with matrix weights, much of the ability to compare objects and dominate one by another is lost. Moreover, for a scalar weight $\omega$, we have the fact that a function $f\in L^p(\omega)$ if and only if $|f|\in L^p(\omega)$. Unfortunately, it is not true for matrix weights.\par

Based on these facts, different from the classical case, the available methods to prove Kolmogorov-Riesz's theorem seems to be not applicable in the setting of matrix weights. And we use a finite dimension argument without using the key lemma in \cite{HH} to obtain compactness criteria in matrix weighted Lebesgue spaces on $(\mathbb R^n, |\cdot|, m)$.\par

The paper is organized as follows. In Section \ref{s2}, we recall some basic notations and facts related to matrix weights. In particular, we recall the so-called John ellipsoid theorem which shows the existence of a positive-definite self-adjoint matrix for a given norm $\rho$ on $\mathbb C^d$; see Lemma \ref{lem-John} below.\par

Section \ref{s3} is devoted to the study of totally bounded sets in matrix weighted Lebesgue spaces on $(\mathbb R^n, |\cdot|, m)$. For a given family of norm $\rho=\{\rho_x\}_{x\in\mathbb R^n}$ on $\mathbb C^d$, we first apply the John ellipsoid theorem and establish a version of Kolmogorov-Riesz's theorem in $L^{p(\cdot)}(\rho)$ for an exponent function $p(\cdot)$. When $p\in (0,\infty)$ and $W$ is a matrix weight on $\mathbb{R}^n$ which is not necessarily invertible, we also obtain a Kolmogorov-Riesz theorem in $L^p(W)$ by following some idea from \cite{CMR}. As an application, a compactness criterion in degenerate Sobolev spaces with matrix weights is given.\par

In Section \ref{s4}, let $(X, d, \mu)$ be a proper metric measure space such that $\mu$ is continuous with respect to the metric $d$ and $\rho= \{\rho_x\}_{x\in X}$ be a family of norms on $\mathbb{C}^d$. We present a Kolmogorov-Riesz theorem in weighted Lebesgue spaces $L^p(\rho, \mu)$ with $p\in[1, \infty)$ on $(X, d, \mu)$ in terms of the average operator, and apply to $L^p(W, \mu)$ when $W$ is an invertible matrix weight on $\mathbb{R}^n$ and $\mu$ is continuous with respect to $|\cdot|$. We would like to mention that our method to prove Theorem \ref{thm-KRLrhov} (see also \cite[Theorem 5]{HH} and \cite[Theorem 3.1]{GZ}) relies on the translation invariance of the Euclidean metric and the Lebesgue measure, which fails on metric measure spaces.\par

In Section \ref{s5}, based on the result obtained in Section \ref{s4}, when the matrix weight $W$ is in the known $A_p$ class on $\mathbb R^n$ in \cite{R}, we further obtain an equivalent characterization of compact subsets in $L^p(W)$ for $p\in (1, \infty)$.\par

Throughout this paper, we will use the following notations. We always use $\omega(\cdot)$ to denote a scalar weight while $W(\cdot)$ to denote a matrix weight. Given two values $A$ and $B$, we will write $A\lesssim B$ if there exists a positive constant $c$, independent of appropriate quantities involved in $A$ and $B$, such that $A\leq cB$. We write $A\approx B$ if $A\lesssim B$ and $B\lesssim A$. We will use $p'$ to denote the conjugate exponent of $p$ when $p\in(1,\infty)$. For a given set $E$, $\chi_E$ means the characteristic function of $E$. Additionally, unless otherwise noted, $(\mathbb{R}^n,|\cdot|,m)$ is the underlying measure space.\par

\section{Preliminaries}\label{s2}
In this section, we recall some basic notations and facts about matrix weights; see \cite{G,CMR} and the references therein.\par

Let $\mathcal{M}_d$ denote the set of all complex-valued, $d\times d$ matrices. A matrix function on $\mathbb{R}^n$ is a map $W:\mathbb{R}^n\to \mathcal{M}_d$. We say that it is measurable if each component of $W$ is a measurable function, and invertible if $\det W(x)\not=0$ a.e. and so $W^{-1}$ exists. Let $\mathcal{S}_d$ be the set of all those $A\in \mathcal{M}_d$ that are self-adjoint and non-negative-definite. For each $A\in \mathcal{S}_d$, $A$ has $d$ non-negative real-valued eigenvalues $\lambda_i$, $1\leq i\leq d$, and the norm of $A$ is defined as the operator norm 
$$\|A\|_{op}:=\sup_{\mathbf{v}\in \mathbb{C}^d, |\mathbf{v}|=1}|A\mathbf{v}|=\max_{i}\lambda_i.$$ 
Moreover, there exists a unitary matrix $U$ such that $U^HAU$ is diagonal, where $U^H$ denotes the conjugate transpose matrix of $U$. We denote a diagonal matrix by $D(\lambda_1,\cdots,\lambda_d)=D(\lambda_i)$. For every $s>0$, we define $A^s:=UD(\lambda^s_i)U^H$. Furthermore, if $A$ is positive-definite, we set $A^{-s}:=UD(\lambda^{-s}_i)U^H$.\par

The following technical lemma is from \cite[Lemma 3.1]{CMR}.
\begin{lem}\label{lem-diag}
Given a measurable matrix function $W:\mathbb{R}^n\to \mathcal{S}_d$, there exists a $d\times d$ measurable matrix function $U$ defined on $\mathbb{R}^n$ such that $U^H(x)W(x)U(x)$ is diagonal, and $U(x)$ is unitary for every $x\in \mathbb{R}^n$.
\end{lem}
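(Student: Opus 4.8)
The plan is to argue by induction on the dimension $d$, peeling off one measurably chosen eigenvector at each stage. The base case $d=1$ is immediate, since every $W(x)\in\mathcal S_1$ is a nonnegative scalar and one may take $U\equiv 1$. For the inductive step I assume the assertion holds in dimension $d-1$. The first ingredient is to obtain the largest eigenvalue as a measurable function: because the operator norm $A\mapsto\|A\|_{op}$ is continuous on $\mathcal S_d$ and coincides there with the maximal eigenvalue, the function $\lambda(x):=\|W(x)\|_{op}$ is measurable, being the composition of a continuous map with the measurable map $W$.

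The crucial and most delicate step is to select, in a measurable way, a unit eigenvector $v(x)$ associated with $\lambda(x)$. I would set $M(x):=\lambda(x)I-W(x)$, which lies in $\mathcal S_d$ and whose kernel is exactly the top eigenspace $E(x)$. For each $t>0$ the matrix $M(x)+tI$ is invertible and $x\mapsto t(M(x)+tI)^{-1}$ is measurable; diagonalizing $M(x)$ shows that the eigenvalues of $t(M(x)+tI)^{-1}$ tend to $1$ on $\ker M(x)$ and to $0$ elsewhere as $t\to0^+$, so that
$$
P(x):=\lim_{t\to0^+}t\bigl(M(x)+tI\bigr)^{-1}
$$
is the orthogonal projection onto $E(x)$. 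As a pointwise limit of measurable matrix functions, $P$ is measurable. Since $E(x)\neq\{0\}$, for each $x$ at least one standard basis vector $e_k$ satisfies $P(x)e_k\neq0$; partitioning $\mathbb R^n$ into the measurable sets $A_k:=\{x:P(x)e_j=0\ (j<k),\ P(x)e_k\neq0\}$ and defining $v(x):=P(x)e_k/|P(x)e_k|$ on $A_k$ produces a measurable unit eigenvector.

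Next I would complete $v(x)$ to a measurable unitary frame, either by a measurable Gram--Schmidt procedure applied to $v(x),e_1,\dots,e_d$ (discarding the one dependent vector via the measurable partition above) or by the explicit Householder reflection sending $e_1$ to $v(x)$, which depends measurably on $v(x)$. This yields a measurable unitary $V(x)$ whose first column is $v(x)$. Since $W(x)v(x)=\lambda(x)v(x)$ and $W(x)$ is self-adjoint, both the first column and the first row of $V(x)^HW(x)V(x)$ equal $\lambda(x)$ in the leading entry and vanish otherwise, so
$$
V(x)^HW(x)V(x)=\begin{pmatrix}\lambda(x)&0\\0&W'(x)\end{pmatrix},
$$
with $W'\colon\mathbb R^n\to\mathcal S_{d-1}$ measurable. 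Applying the inductive hypothesis to $W'$ furnishes a measurable unitary $U'(x)$ diagonalizing $W'(x)$, and then $U(x):=V(x)\,D(1,U'(x))$ is measurable, unitary, and diagonalizes $W(x)$, completing the induction.

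I expect the main obstacle to lie in the eigenvector selection of the second step, because eigenvectors need not depend continuously on $W(x)$ when eigenvalues collide, and no smoothness of $W$ is available. The resolvent formula for the spectral projection $P$, combined with the basis-vector partition, is exactly what sidesteps this issue: it converts a discontinuous geometric choice into a pointwise limit of measurable quantities followed by a measurable case distinction, so that measurability is preserved throughout with no hypothesis on $W$ beyond measurability.
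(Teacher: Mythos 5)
Your argument is correct, and it is worth noting that the paper itself offers no proof of this lemma at all: it simply quotes the statement from Cruz-Uribe--Moen--Rodney \cite[Lemma 3.1]{CMR}, so there is nothing in the text to compare against line by line. Your induction on $d$ is a legitimate standalone proof. The two genuinely delicate points are both handled: the top eigenvalue is measurable because it equals $\|W(x)\|_{op}$, and the resolvent formula $P(x)=\lim_{t\to 0^+}t(M(x)+tI)^{-1}$ (taken along a countable sequence $t=1/k$, which you should say explicitly to get measurability of the limit) produces the spectral projection onto the top eigenspace as a pointwise limit of measurable functions, after which the partition by the first index $k$ with $P(x)e_k\neq 0$ gives a measurable unit eigenvector. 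The only step stated a bit too casually is the completion to a unitary frame: over $\mathbb{C}$ a Householder reflection maps $e_1$ to $v(x)$ only after adjusting $v(x)$ by a measurable phase so that $\langle e_1,v(x)\rangle\geq 0$, and the degenerate case $v(x)=e_1$ needs its own (measurable) branch; alternatively your Gram--Schmidt variant works as written, since the index of the one discarded vector is determined by the vanishing of Gram determinants and is therefore a measurable function of $x$. Finally, one should observe that $W'(x)$ is again self-adjoint and nonnegative-definite (it is the compression of $W(x)$ to the orthogonal complement of $v(x)$), so the inductive hypothesis indeed applies. With those small points made explicit, the proof is complete and requires nothing beyond measurability of $W$, exactly as the lemma demands.
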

Based on Lemma \ref{lem-diag}, it is easy to see that for any measurable matrix function $W:\mathbb{R}^n\to \mathcal{S}_d$, $W^s$ is a measurable matrix function satisfying that, for any $x\in \mathbb{R}^n$,
\begin{align}\label{eq-2.1}
\|W^s(x)\|_{op}=\max\limits_i\lambda^s_i(x).\tag{2.1}
\end{align}
If $W$ is invertible, $W^{-s}$ is a measurable matrix function satisfying that, for any $x\in \mathbb{R}^n$,
\begin{align}\label{eq-2.2}
\|W^{-s}(x)\|^{-1}_{op}=\min\limits_i\lambda^s_i(x).\tag{2.2}
\end{align}\par

By a matrix weight on $\mathbb{R}^n$ we mean a measurable matrix function $W:\mathbb{R}^n\to \mathcal{S}_d$ such that $\|W\|_{op}\in L^1_{\rm{loc}}(\mathbb{R}^n)$. Equivalently, each eigenvalue function $\lambda_i\in L^1_{\rm{loc}}(\mathbb{R}^n),\ 1\leq i\leq d$. Define the matrix weighted Lebesgue space $L^p(W)$ for $p\in(0,\infty)$ to be the set of all measurable vector-valued functions $\mathbf{f}:=(f_1,\cdots,f_d)^T:\mathbb{R}^n\to \mathbb{C}^d$ such that $$\|\mathbf{f}\|^p_{L^p(W)}:=\int_{\mathbb{R}^n} \big|W^{\frac{1}{p}}(x)\mathbf{f}(x)\big|^p dx<\infty.$$\par

In many cases, it is more convenient to characterize matrix weighted Lebesgue spaces in the following language; see \cite{V}. Let $\rho:=\{\rho_x\}_{x\in \mathbb{R}^n}$ be a family of norms on $\mathbb{C}^d$, where for each $x\in \mathbb{R}^n$, $\rho_x:\mathbb{C}^d\to \mathbb{R}^+:=[0,\infty)$. Define the weighted Lebesgue space $L^p(\rho)$ for $p\in(0,\infty)$ to be the set of all measurable vector-valued functions $\mathbf{f}:\mathbb{R}^n\to \mathbb{C}^d$ such that $$\|\mathbf{f}\|^p_{L^p(\rho)}:=\int_{\mathbb{R}^n}[\rho_x(\mathbf{f}(x))]^pdx<\infty,$$ where we always assume that $\rho_x(\mathbf{f}(x))$ is a measurable function on $\mathbb{R}^n$ for any measurable vector-valued function $\mathbf{f}$.\par

For any given invertible matrix weight $W$, one can reduce $L^p(\rho)$ to $L^p(W)$ by setting $\rho_x(\cdot):=|W^{\frac{1}{p}}(x)\cdot|$. The following so-called John ellipsoid theorem (see \cite[Proposition 1.2]{G}) shows that the two matrix weighted Lebesgue spaces above actually coincide.
\begin{lem}\label{lem-John}
Given a norm $\rho$ on $\mathbb{C}^d$, there exists a positive-definite self-adjoint matrix $W$ such that
\begin{align}\label{eq-2.3}
\rho(\mathbf{v})\leq |W(\mathbf{v})|\leq d^{\frac{1}{2}} \rho(\mathbf{v}),\ \forall \ \mathbf{v}\in \mathbb{C}^d.\tag{2.3}
\end{align}
\end{lem}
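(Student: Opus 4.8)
The plan is to realize the desired matrix $W$ geometrically as the maximal-volume inscribed (Hermitian) ellipsoid of the unit ball of $\rho$, and to read the two inequalities in \eqref{eq-2.3} off the resulting inclusions. Set $B:=\{\mathbf v\in\mathbb C^d:\rho(\mathbf v)\le 1\}$. Since $\rho$ is a norm on a finite-dimensional space, $B$ is a compact, convex, balanced body (invariant under $\mathbf v\mapsto e^{i\theta}\mathbf v$) with nonempty interior, and $\rho$ is comparable to the Euclidean norm $|\cdot|$. For a positive-definite self-adjoint $A$, the set $E_A:=\{\mathbf v:|A\mathbf v|\le 1\}$ is a Hermitian ellipsoid whose volume is a strictly decreasing function of $\det A$, and the inclusion $E_A\subseteq B$ is equivalent to $\rho(\mathbf v)\le|A\mathbf v|$ for all $\mathbf v\in\mathbb C^d$. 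Thus I would first minimize $\det A$ over the (nonempty) set of such feasible $A$; the comparability $\rho\gtrsim|\cdot|$ bounds the eigenvalues of feasible $A$ from below, which together with a determinant bound confines the minimization to a compact set, so a minimizer $W$ exists. Feasibility of $W$ gives the left inequality $\rho(\mathbf v)\le|W\mathbf v|$ immediately.

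The remaining inequality $|W\mathbf v|\le d^{1/2}\rho(\mathbf v)$ amounts to the inclusion $B\subseteq d^{1/2}E_W$, and this is the crux. After the linear change of variables $\mathbf w=W\mathbf v$ (which replaces $\rho$ by $\tilde\rho(\mathbf w):=\rho(W^{-1}\mathbf w)$ and turns $E_W$ into the Euclidean unit ball $E_I$), I may assume $W=I$, so that $E_I\subseteq \tilde B:=\{\tilde\rho\le 1\}$ is the maximal-volume inscribed Hermitian ellipsoid, and the goal becomes $\tilde B\subseteq d^{1/2}E_I$. The main obstacle is to extract from the optimality of $W=I$ a John-type decomposition: there exist finitely many contact points $\mathbf u_j\in\partial \tilde B$ with $|\mathbf u_j|=1$ and weights $c_j>0$ such that
\begin{align*}
\sum_j c_j\,\mathbf u_j\mathbf u_j^{H}=I_d .
\end{align*}
I would obtain this by a first-variation argument: perturbing $I$ by a Hermitian $H$ and using that no such perturbation can both enlarge the volume and keep the ellipsoid inside $\tilde B$, a separation (Lagrange multiplier) argument in the real vector space of Hermitian matrices forces $I_d$ to lie in the convex cone generated by the rank-one contact forms $\mathbf u_j\mathbf u_j^{H}$.

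Granting the decomposition, the conclusion follows cleanly, and this is precisely where the complex structure produces the dimension $d$ rather than $2d$. Taking the trace gives $\sum_j c_j=\operatorname{tr}(I_d)=d$. Each contact point $\mathbf u_j$ lies on $\partial\tilde B$, and since $E_I$ is inscribed and touches $\partial\tilde B$ at $\mathbf u_j$, the supporting hyperplane of $\tilde B$ there has outer normal $\mathbf u_j$, i.e. $\operatorname{Re}\langle \mathbf v,\mathbf u_j\rangle\le 1$ for all $\mathbf v\in\tilde B$; applying this to $e^{i\theta}\mathbf v$ and using that $\tilde B$ is balanced upgrades it to $|\langle\mathbf v,\mathbf u_j\rangle|\le 1$. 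Hence, for every $\mathbf v\in\tilde B$,
\begin{align*}
|\mathbf v|^2=\mathbf v^{H}\Big(\sum_j c_j\,\mathbf u_j\mathbf u_j^{H}\Big)\mathbf v=\sum_j c_j\,|\langle\mathbf v,\mathbf u_j\rangle|^2\le\sum_j c_j=d,
\end{align*}
so $\tilde B\subseteq d^{1/2}E_I$. Undoing the change of variables yields $|W\mathbf v|\le d^{1/2}\rho(\mathbf v)$, completing \eqref{eq-2.3}. The hardest and least routine step is the John decomposition; the existence of the minimizer and the final trace estimate are comparatively standard once the geometric reformulation is in place.
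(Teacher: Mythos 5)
Your argument is essentially correct, but note that the paper does not prove this lemma at all: it is quoted verbatim from Goldberg's work (Proposition 1.2 of \cite{G}), where it appears as the existence of ``reducing operators'' and is itself an instance of the complex John ellipsoid theorem. What you have written is therefore a from-scratch proof of the cited result rather than an alternative to an argument in the paper. Your outline handles the two genuinely delicate points correctly: the restriction to Hermitian ellipsoids together with the balancedness of $B$ (which upgrades the real supporting inequality $\operatorname{Re}\langle \mathbf v,\mathbf u_j\rangle\le 1$ to $|\langle \mathbf v,\mathbf u_j\rangle|\le 1$), and the use of the \emph{complex} trace $\operatorname{tr}(I_d)=d$, which is exactly what yields the constant $d^{1/2}$ instead of the $(2d)^{1/2}$ one would get by viewing $\rho$ as a norm on $\mathbb R^{2d}$ and applying the real John theorem. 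The existence of the minimizer and the final estimate $|\mathbf v|^2=\sum_j c_j|\langle\mathbf v,\mathbf u_j\rangle|^2\le\sum_j c_j=d$ are complete as written. The only part you assert rather than prove is the John decomposition $\sum_j c_j\,\mathbf u_j\mathbf u_j^{H}=I_d$; your separation sketch in the real vector space of Hermitian matrices is the standard route (one shows that if $I_d/d$ were not in the convex hull of the contact forms $\mathbf u_j\mathbf u_j^{H}$, a traceless Hermitian perturbation $I+\delta H$ would produce an admissible ellipsoid of strictly larger volume), but to be a full proof it would need the compactness of the contact set and the uniform-slack argument away from contact points spelled out. For the purposes of this paper, a citation to \cite{G} suffices, and your proof is a correct expansion of what that citation contains.
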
\par

We now recall the definition of matrix $A_p$ weights due to Frazier-Roudenko \cite{R,FR}; see also \cite{NT} for another definition of matrix $A_p$ weights when $p>1$.
\begin{df}\label{df-mw}
Let $W$ be an invertible matrix weight.
\begin{enumerate}
\item[$(\rm{i})$] When $p\in(1,\infty)$, we say $W\in A_p$ if $\|W^{-1}\|^{\frac{p'}{p}}_{op}\in L^1_{\rm{loc}}(\mathbb{R}^n)$ and
$$[W]_{A_p}:=\sup_{Q}\frac{1}{|Q|}\int_Q\bigg(\frac{1}{|Q|}\int_Q\big\|W^{\frac{1}{p}}(x)W^{-\frac{1}{p}}(y)\big\|^{p'}_{op}dy\bigg)^{\frac{p}{p'}}dx<\infty,$$ where the supremum is taken over all cubes $Q\subset\mathbb{R}^n$.
\item[$(\rm{ii})$] When $p\in(0,1]$, we say $W\in A_p$ if $\|W^{-1}\|_{op}\in L^1_{\rm{loc}}(\mathbb{R}^n)$ and
$$[W]_{A_p}:=\sup_{Q}\esssup_{x\in Q}\frac{1}{|Q|}\int_Q \big\|W^{\frac{1}{p}}(y)W^{-\frac{1}{p}}(x)\big\|^p_{op}dy<\infty,$$
where the first supremum is taken over all cubes $Q\subset\mathbb{R}^n$.
\end{enumerate}
\end{df}\par

We would like to mention that when $p\geq 1$, $d=1$ and $W(x)=\omega(x)$ is a scalar weight, the matrix $A_p$ condition is the Muckenhoupt $A_p$ condition. Moreover, we have the following lemma due to \cite[Lemma 4.5]{CMR}.
\begin{lem}\label{lem-ms}
Let $1<p<\infty$. If $W\in A_p$, then $\|W\|_{op}$ and $\|W^{-1}\|_{op}^{-1}$ are scalar $A_p$ weights.
\end{lem}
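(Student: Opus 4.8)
The plan is to deduce the scalar $A_p$ bounds for $\|W\|_{op}$ and $\|W^{-1}\|_{op}^{-1}$ directly from the matrix $A_p$ average in Definition \ref{df-mw}(i) by producing two pointwise lower bounds for the operator norm $\|W^{\frac{1}{p}}(x)W^{-\frac{1}{p}}(y)\|_{op}$. Write $\lambda_{\max}(x):=\|W(x)\|_{op}$ and $\lambda_{\min}(x):=\|W^{-1}(x)\|_{op}^{-1}$; by \eqref{eq-2.1} and \eqref{eq-2.2} these are the largest and smallest eigenvalues of $W(x)$, both positive a.e. by invertibility, and $\lambda_{\min}\le\lambda_{\max}=\|W\|_{op}\in L^1_{\rm loc}$. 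Throughout recall $p/p'=p-1$ and $p'/p=1/(p-1)$.

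First I would record two elementary inequalities valid for positive-definite self-adjoint $A,B$: one has $\|AB\|_{op}\ge \|A\|_{op}/\|B^{-1}\|_{op}$, obtained by testing $AB$ against $\mathbf v=B^{-1}\mathbf u/|B^{-1}\mathbf u|$ where $\mathbf u$ is a unit top eigenvector of $A$; and $\|AB\|_{op}\ge \|B\|_{op}\,\|A^{-1}\|_{op}^{-1}$, obtained by testing $AB$ against a unit top eigenvector $\mathbf u$ of $B$ and using $|A\mathbf u|\ge\|A^{-1}\|_{op}^{-1}$. Applying these with $A=W^{\frac{1}{p}}(x)$ and $B=W^{-\frac{1}{p}}(y)$, and converting the resulting operator norms of powers of $W$ into eigenvalues through \eqref{eq-2.1}--\eqref{eq-2.2}, yields
\begin{align*}
\|W^{\frac{1}{p}}(x)W^{-\frac{1}{p}}(y)\|_{op}&\ge \Big(\frac{\lambda_{\max}(x)}{\lambda_{\max}(y)}\Big)^{1/p},\\
\|W^{\frac{1}{p}}(x)W^{-\frac{1}{p}}(y)\|_{op}&\ge \Big(\frac{\lambda_{\min}(x)}{\lambda_{\min}(y)}\Big)^{1/p}.
\end{align*}

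Next I would substitute each bound into the matrix $A_p$ functional. Raising the first bound to the power $p'$, averaging in $y$ over a cube $Q$, raising to the power $p/p'$, and finally averaging in $x$ — pulling $\lambda_{\max}(x)$ and the $y$-average outside the respective integrals since each is constant in the other variable — gives
\begin{align*}
\frac{1}{|Q|}\int_Q\Big(\frac{1}{|Q|}\int_Q\|W^{\frac{1}{p}}(x)W^{-\frac{1}{p}}(y)\|_{op}^{p'}\,dy\Big)^{p/p'}dx
\ge \Big(\frac{1}{|Q|}\int_Q\lambda_{\max}\Big)\Big(\frac{1}{|Q|}\int_Q\lambda_{\max}^{-1/(p-1)}\Big)^{p-1},
\end{align*}
which is exactly the scalar $A_p$ quotient of $\lambda_{\max}=\|W\|_{op}$ over $Q$; taking the supremum over cubes yields $[\|W\|_{op}]_{A_p}\le[W]_{A_p}<\infty$. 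Running the identical computation with the second bound produces the scalar $A_p$ quotient of $\lambda_{\min}=\|W^{-1}\|_{op}^{-1}$, so $[\|W^{-1}\|_{op}^{-1}]_{A_p}\le[W]_{A_p}<\infty$.

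Finally, the remaining conditions for these to be genuine scalar $A_p$ weights are already in hand: $\lambda_{\max},\lambda_{\min}\in L^1_{\rm loc}$ as noted, while $\lambda_{\max}^{-1/(p-1)}\le\lambda_{\min}^{-1/(p-1)}=\|W^{-1}\|_{op}^{p'/p}\in L^1_{\rm loc}$ by the hypothesis $W\in A_p$. I expect the only genuinely delicate point to be the correct choice of test vectors in the two operator-norm lower bounds, together with the bookkeeping that matches the $x$/$y$ roles in Definition \ref{df-mw}(i) with the numerator/denominator roles in the scalar $A_p$ quotient; once \eqref{eq-2.1} and \eqref{eq-2.2} reduce operator norms of $W^{\pm 1/p}$ to eigenvalues, the rest is a sequence of monotone substitutions requiring no further harmonic analysis.
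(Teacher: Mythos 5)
Your proof is correct. Note that the paper itself gives no argument for this lemma---it is quoted verbatim from \cite[Lemma 4.5]{CMR}---and the proof you supply is essentially the standard one found there: the two test-vector lower bounds $\|AB\|_{op}\ge\|A\|_{op}\|B^{-1}\|_{op}^{-1}$ and $\|AB\|_{op}\ge\|B\|_{op}\|A^{-1}\|_{op}^{-1}$ for positive-definite self-adjoint $A,B$, applied with $A=W^{1/p}(x)$, $B=W^{-1/p}(y)$, reduce the matrix $A_p$ average to the scalar $A_p$ quotients of $\lambda_{\max}$ and $\lambda_{\min}$ exactly as you compute, and the local integrability checks via $\|W\|_{op}\in L^1_{\rm loc}$ and $\|W^{-1}\|_{op}^{p'/p}\in L^1_{\rm loc}$ close the argument.
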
\par

Next we recall a variant of the maximal operator introduced by Christ-Goldberg in \cite{CG,G}. The Christ-Goldberg maximal operator $M_{\omega}$ is defined as
$$M_{\omega}\mathbf{f}(x):=\sup_{B\ni x}\frac{1}{|B|}\int_B\big|W^{\frac{1}{p}}(x)W^{-\frac{1}{p}}(y)\mathbf{f}(y)\big|dy,$$
where the supremum is taken over all balls in $\mathbb{R}^n$ containing $x$. They obtained the following strong-type estimate for the Christ-Goldberg maximal operator.
\begin{lem}\label{lem-CGm}
Let $1<p<\infty$. If $W\in A_p$, then there exists $\delta>0$ such that when $q\in \{q>1:|p-q|<\delta\}$,
$$\|M_{\omega}\mathbf{f}\|_{L^q(\mathbb{R}^n)}\lesssim \|\mathbf{f}\|_{L^q(\mathbb{R}^n,\mathbb{C}^d)},\ \forall\ \mathbf{f}\in L^q(\mathbb{R}^n,\mathbb{C}^d),$$
where the implicit constant depends only on $q$ and
$$\|\mathbf{f}\|_{L^q(\mathbb{R}^n,\mathbb{C}^d)}:=\bigg(\int_{\mathbb{R}^n}|\mathbf{f}(x)|^qdx\bigg)^{\frac{1}{q}}.$$
\end{lem}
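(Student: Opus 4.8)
The plan is to dominate the matricial operator $M_{\omega}$ pointwise by a scalar weighted Hardy--Littlewood maximal quantity built from reducing operators, to settle the case $q=p$ this way, and then to read off the open range of admissible $q$ from the self-improvement (reverse Hölder inequality) of scalar $A_p$ weights furnished by Lemma~\ref{lem-ms}. The feature that forces this route, rather than a naive scalarization, is non-commutativity: the crude bound $\|W^{\frac1p}(x)W^{-\frac1p}(y)\|_{op}\le\|W(x)\|_{op}^{1/p}\|W^{-1}(y)\|_{op}^{1/p}$ (using \eqref{eq-2.1}) only yields $M_{\omega}\mathbf f(x)\le\|W(x)\|_{op}^{1/p}\,M\big(\|W^{-1}\|_{op}^{1/p}|\mathbf f|\big)(x)$, and the resulting $L^q$-estimate is a genuine \emph{two-weight} maximal inequality with the distinct scalar weights $\|W\|_{op}^{q/p}$ and $\|W^{-1}\|_{op}^{-q/p}$ (they coincide only when $d=1$). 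Although both are scalar $A_p$ weights by Lemma~\ref{lem-ms}, such a two-weight inequality between two different $A_p$ weights need not hold, so the fine information carried by the matrix $A_p$ condition must be retained.

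To retain it, for each ball $B$ I would apply the John ellipsoid theorem (Lemma~\ref{lem-John}) to the two averaged norms $\mathbf v\mapsto(\frac1{|B|}\int_B|W^{\frac1p}(x)\mathbf v|^p\,dx)^{1/p}$ and $\mathbf v\mapsto(\frac1{|B|}\int_B|W^{-\frac1p}(y)\mathbf v|^{p'}\,dy)^{1/p'}$ (these are norms since $W^{\pm\frac1p}$ is invertible for $W\in A_p$), obtaining positive-definite self-adjoint reducing matrices $\mathcal W_B$ and $\mathcal V_B$ comparable, up to the dimensional factor $d^{1/2}$, to the respective norms. Factoring $W^{\frac1p}(x)W^{-\frac1p}(y)=\big(W^{\frac1p}(x)\mathcal V_B\big)\big(\mathcal V_B^{-1}W^{-\frac1p}(y)\big)$, using $|A\mathbf v|\le\|A\|_{op}|\mathbf v|$ and then Hölder with exponents $p,p'$ in the $y$-average, the defining property of $\mathcal V_B$ forces the arising factor $\big(\frac1{|B|}\int_B\|\mathcal V_B^{-1}W^{-\frac1p}(y)\|_{op}^{p'}\,dy\big)^{1/p'}$ to be $\lesssim_d 1$, so that
\[
M_{\omega}\mathbf f(x)\lesssim_d\sup_{B\ni x}\|W^{\tfrac1p}(x)\mathcal V_B\|_{op}\Big(\frac1{|B|}\int_B|\mathbf f(y)|^p\,dy\Big)^{1/p}.
\]
Inserting $\mathcal W_B$ via $\|W^{\frac1p}(x)\mathcal V_B\|_{op}\le\|W^{\frac1p}(x)\mathcal W_B^{-1}\|_{op}\,\|\mathcal W_B\mathcal V_B\|_{op}$, the factor $\|\mathcal W_B\mathcal V_B\|_{op}$ is uniformly bounded by a constant depending only on $[W]_{A_p}$ (this is the reducing-operator reformulation of Definition~\ref{df-mw}(i)), while the defining property of $\mathcal W_B$ gives $\frac1{|B|}\int_B\|W^{\frac1p}(x)\mathcal W_B^{-1}\|_{op}^p\,dx\lesssim_d 1$. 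Converting this pointwise reducing-operator bound into the norm inequality $\|M_{\omega}\mathbf f\|_{L^p}\lesssim\|\mathbf f\|_{L^p}$ is the core of the Christ--Goldberg argument, and I would carry it out through the averaged control just recorded together with a covering/summation over the scales realizing the supremum.

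For the open range of $q$ I would be guided by the exact scalar model, where $M_{\omega}f=\omega^{\frac1p}M(\omega^{-\frac1p}|f|)$ gives $\|M_{\omega}f\|_{L^q}\lesssim\|f\|_{L^q}$ precisely when $M$ is bounded on $L^q(\omega^{q/p})$, i.e. when $\omega^{q/p}\in A_q$; since $\omega^{q/p}\to\omega\in A_p$ as $q\to p$, the reverse Hölder inequality absorbs the slight change of exponent and yields $\omega^{q/p}\in A_q$ for all $q$ in a neighborhood of $p$. In the matricial setting I would rerun the reduction above with an auxiliary Hölder exponent $s$ slightly below $q$, so that the $s$-maximal function of $\mathbf f$ appears and is bounded on $L^q$ because $q>s$; the point is that the arising averaged reducing-operator factors, now tied to $s$ rather than $p$, remain uniformly bounded as long as $s$ is close enough to $p$, and this closeness is guaranteed by the reverse Hölder self-improvement of the scalar $A_p$ weights $\|W\|_{op}$ and $\|W^{-1}\|_{op}^{-1}$ of Lemma~\ref{lem-ms}. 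Tracking the reverse Hölder exponent then pins down a single $\delta>0$ valid for all $q$ with $|q-p|<\delta$.

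The main obstacle is exactly the step that the crude operator-norm scalarization cannot handle: controlling $\sup_{B\ni x}\|W^{\frac1p}(x)\mathcal W_B^{-1}\|_{op}$ against the maximal function of $|\mathbf f|$ without discarding the gain that distinguishes $M_{\omega}$ from the $p$-maximal operator $M_p$, which is \emph{not} bounded on $L^p$. Making this rigorous requires both the reducing-operator encoding of the matrix $A_p$ condition (to keep the off-diagonal information lost in passing to $\|W\|_{op}$ and $\|W^{-1}\|_{op}^{-1}$) and the quantitative reverse Hölder self-improvement from Lemma~\ref{lem-ms}; the bookkeeping that turns that quantitative self-improvement into a uniform $\delta$ is where I expect the real work to lie.
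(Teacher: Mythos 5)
The paper does not prove this lemma at all: it is recalled verbatim as a known result of Christ--Goldberg and Goldberg (see \cite{CG,G}), so there is no in-paper argument to compare yours against. Your sketch does follow the genuine strategy of those original papers --- reducing operators $\mathcal{W}_B$, $\mathcal{V}_B$ produced from the John ellipsoid theorem (Lemma~\ref{lem-John}), the reformulation of Definition~\ref{df-mw}(i) as $\sup_B\|\mathcal{W}_B\mathcal{V}_B\|_{op}<\infty$, and the resulting pointwise domination of $M_{\omega}\mathbf{f}$ --- and you correctly diagnose both why naive scalarization via $\|W\|_{op}$ and $\|W^{-1}\|_{op}^{-1}$ fails and where the real difficulty sits.

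But as a proof the proposal has a genuine gap precisely at that point. The two facts you actually establish, namely the pointwise bound $M_{\omega}\mathbf{f}(x)\lesssim\sup_{B\ni x}\|W^{1/p}(x)\mathcal{W}_B^{-1}\|_{op}\big(\frac{1}{|B|}\int_B|\mathbf{f}|^p\big)^{1/p}$ and the normalization $\frac{1}{|B|}\int_B\|W^{1/p}(x)\mathcal{W}_B^{-1}\|_{op}^p\,dx\lesssim_d 1$, do not by themselves imply the $L^p$ inequality: applying H\"older to them only reproduces the $p$-maximal operator, which, as you yourself note, is unbounded on $L^p$. What closes the argument in \cite{G} is a quantitative self-improvement that you never supply: $W\in A_p$ forces the family of scalar functions $x\mapsto\|W^{1/p}(x)\mathcal{W}_B^{-1}\|_{op}^p$ to satisfy a reverse H\"older (uniform $A_\infty$-type) inequality with constants independent of the ball $B$, and this is what permits replacing the exponent $p$ in the inner average of $|\mathbf{f}|$ by some $s<p$, landing on the $s$-maximal operator, which is bounded on $L^q$ for $q>s$ and yields simultaneously the $L^p$ bound and the open interval of admissible $q$. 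Your appeal to Lemma~\ref{lem-ms} cannot substitute for this: the reverse H\"older inequality needed is for the ball-dependent reducing-operator weights, not for the two global scalar weights $\|W\|_{op}$ and $\|W^{-1}\|_{op}^{-1}$, which (as your own two-weight remark shows) retain too little of the matrix $A_p$ information. Deferring this step as ``the core of the Christ--Goldberg argument'' leaves the main content of the lemma unproved; since the lemma is in any case quoted from the literature, the appropriate course here is to cite \cite{CG,G} rather than to reprove it.
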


\section{Compactness criteria on $\mathbb{R}^n$}\label{s3}
This section is devoted to the study of Kolmogorov-Riesz's theorem in matrix weighted Lebesgue spaces on $\mathbb{R}^n$. In \cite[Theorem 5]{GM2}, G\'{o}rka-Macios established a compactness criterion in variable exponent Lebesgue spaces $L^{p(\cdot)}(\mathbb{R}^n)$. The first main result of this section is to obtain a generalization of \cite[Theorem 5]{GM2} to $L^{p(\cdot)}(\rho)$. Before that, we recall some basic notations and results about variable exponent Lebesgue spaces; see \cite{CF}.\par

Let $\mathcal{P}(\mathbb{R}^n)$ be the set of all measurable functions $p(\cdot):\mathbb{R}^n\to [1,\infty]$. The elements of $\mathcal{P}(\mathbb{R}^n)$ are called exponent functions. Given an exponent function $p(\cdot)\in \mathcal{P}(\mathbb{R}^n)$, we put $$p_+:=\esssup_{x\in \mathbb{R}^n}p(x),\ \ \ p_-:=\essinf_{x\in \mathbb{R}^n}p(x).$$
We assume that exponent functions $p(\cdot)$ are bounded, i.e. $p_+<\infty$. Define the weighted variable Lebesgue space $L^{p(\cdot)}(\rho)$ to be the set of all measurable vector-valued functions $\mathbf{f}:\mathbb{R}^n\to \mathbb{C}^d$ such that the modular $$\int_{\mathbb{R}^n}[\rho_x(\mathbf{f}(x))]^{p(x)}dx<\infty,$$ equipped with the Luxemburg norm $$\|\mathbf{f}\|_{L^{p(\cdot)}(\rho)}:=\inf\bigg\{\lambda>0:\int_{\mathbb{R}^n}\bigg[\frac{\rho_x(\mathbf{f}(x))}{\lambda}\bigg]^{p(x)}dx\leq 1\bigg\}.$$\par

According to the above definition and the convexity of the modular, we obtain the following useful results on the relationship between the modular and the norm.
\begin{lem}\label{lem-v}
Let $p_+<\infty$, $0<\lambda\leq 1$, and $\mathbf{f}\in L^{p(\cdot)}(\rho)$. Then the following statements are true:
\begin{enumerate}
\item[$(a)$] If $\|\mathbf{f}\|_{L^{p(\cdot)}(\rho)}\leq 1$, then $\int_{\mathbb{R}^n}[\rho_x(\mathbf{f}(x))]^{p(x)}dx \leq\|\mathbf{f}\|_{L^{p(\cdot)}(\rho)};$
\item[$(b)$] If $\|\mathbf{f}\|_{L^{p(\cdot)}(\rho)}> 1$, then $\int_{\mathbb{R}^n}[\rho_x(\mathbf{f}(x))]^{p(x)}dx \geq\|\mathbf{f}\|_{L^{p(\cdot)}(\rho)};$
\item[$(c)$] $\|\mathbf{f}\|_{L^{p(\cdot)}(\rho)}\leq \int_{\mathbb{R}^n}[\rho_x(\mathbf{f}(x))]^{p(x)}dx+1;$
\item[$(d)$] If $\int_{\mathbb{R}^n}[\rho_x(\mathbf{f}(x))]^{p(x)}dx\leq \lambda^{p_+}$, then $\|\mathbf{f}\|_{L^{p(\cdot)}(\rho)}\leq \lambda$.
\end{enumerate}
\end{lem}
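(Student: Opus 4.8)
The plan is to reduce all four assertions to elementary monotonicity and power estimates for the modular
$$\Phi(\mathbf{f}):=\int_{\mathbb{R}^n}[\rho_x(\mathbf{f}(x))]^{p(x)}\,dx,$$
combined with the infimum defining the Luxemburg norm. The decisive structural fact is that each $\rho_x$ is a genuine norm, hence positively homogeneous, so that $\rho_x(\mathbf{f}(x)/\lambda)=\rho_x(\mathbf{f}(x))/\lambda$ for every $\lambda>0$ and therefore
$$\Phi(\mathbf{f}/\lambda)=\int_{\mathbb{R}^n}[\rho_x(\mathbf{f}(x))]^{p(x)}\lambda^{-p(x)}\,dx.$$
Because $p(x)\ge 1>0$, the function $\lambda\mapsto\Phi(\mathbf{f}/\lambda)$ is non-increasing on $(0,\infty)$, so $\{\lambda>0:\Phi(\mathbf{f}/\lambda)\le 1\}$ is an interval unbounded above with infimum $s:=\|\mathbf{f}\|_{L^{p(\cdot)}(\rho)}$; in particular $\Phi(\mathbf{f}/\lambda)>1$ whenever $\lambda<s$. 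Writing $g(x):=\rho_x(\mathbf{f}(x))$ turns everything into statements about the scalar function $g$, and the only analytic inputs beyond this are the elementary inequalities governing $\lambda^{p(x)}$ when $1\le p(x)\le p_+$.

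For (a), suppose $s\le 1$. If $s=0$ then $\Phi(\mathbf{f}/\lambda)\le 1$ for arbitrarily small $\lambda>0$, and letting $\lambda\to 0^+$ together with Fatou's lemma forces $g=0$ a.e., so $\Phi(\mathbf{f})=0=s$. If $0<s<1$, pick $\lambda_n\downarrow s$ with $\Phi(\mathbf{f}/\lambda_n)\le 1$; for large $n$ we have $0<\lambda_n\le 1$, whence $\lambda_n^{-p(x)}\ge\lambda_n^{-1}$ and $\lambda_n^{-1}\Phi(\mathbf{f})\le\Phi(\mathbf{f}/\lambda_n)\le 1$, so $\Phi(\mathbf{f})\le\lambda_n\to s$. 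If $s=1$, the monotone convergence theorem applied to $\lambda_n\downarrow 1$ gives $\Phi(\mathbf{f})=\lim_n\Phi(\mathbf{f}/\lambda_n)\le 1=s$. For (b), suppose $s>1$ and fix $\lambda\in(1,s)$; then $\Phi(\mathbf{f}/\lambda)>1$, while $\lambda\ge 1$ gives $\lambda^{-p(x)}\le\lambda^{-1}$, so $\lambda^{-1}\Phi(\mathbf{f})\ge\Phi(\mathbf{f}/\lambda)>1$, i.e. $\Phi(\mathbf{f})>\lambda$; letting $\lambda\to s^-$ yields $\Phi(\mathbf{f})\ge s$.

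For (c) and (d) I would use only the unit-ball property $\Phi(\mathbf{f})\le 1\Rightarrow\|\mathbf{f}\|_{L^{p(\cdot)}(\rho)}\le 1$ (take $\lambda=1$ in the infimum) together with the same power estimates. For (d), since $0<\lambda\le 1$ and $p(x)\le p_+$ we have $\lambda^{-p(x)}\le\lambda^{-p_+}$, so $\Phi(\mathbf{f}/\lambda)\le\lambda^{-p_+}\Phi(\mathbf{f})\le\lambda^{-p_+}\lambda^{p_+}=1$, hence $\lambda$ lies in the defining set and $\|\mathbf{f}\|_{L^{p(\cdot)}(\rho)}\le\lambda$; this is the single place where the hypothesis $p_+<\infty$ is essential. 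For (c), if $\Phi(\mathbf{f})\le 1$ the unit-ball property gives $\|\mathbf{f}\|_{L^{p(\cdot)}(\rho)}\le 1\le\Phi(\mathbf{f})+1$, while if $\Phi(\mathbf{f})\ge 1$ we set $\lambda:=\Phi(\mathbf{f})\ge 1$ and use $\lambda^{-p(x)}\le\lambda^{-1}$ to get $\Phi(\mathbf{f}/\lambda)\le\lambda^{-1}\Phi(\mathbf{f})=1$, so $\|\mathbf{f}\|_{L^{p(\cdot)}(\rho)}\le\lambda=\Phi(\mathbf{f})\le\Phi(\mathbf{f})+1$.

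I expect the only genuinely delicate point to be the boundary bookkeeping at $\lambda=s$ and $\lambda=1$: one must ensure that the infimum defining the norm is actually attained (or correctly approached) and that the modular does not jump there. This is handled by the monotonicity of $\lambda\mapsto\Phi(\mathbf{f}/\lambda)$ together with Fatou's lemma and the monotone convergence theorem, with the boundedness $p_+<\infty$ entering to control $\lambda^{-p(x)}$ uniformly; everything else is the routine power inequalities recorded above.
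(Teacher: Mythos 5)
Your proposal is correct, and it follows the same route as the paper: the paper proves $(d)$ by exactly your computation $\Phi(\mathbf{f}/\lambda)\leq\lambda^{-p_+}\Phi(\mathbf{f})\leq 1$, and for $(a)$--$(c)$ it simply defers to the standard modular--norm estimates of \cite[Corollary 2.22]{CF}, which are precisely the monotonicity and power inequalities you write out in full (with the homogeneity of $\rho_x$ supplying the adaptation to the vector-valued setting). The only difference is that you supply the details the paper cites, including the careful treatment of the boundary cases $s=0$, $s=1$, which is sound.
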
\par
\begin{proof}
$(a)$--$(c)$ hold by adapting the arguments in \cite[Corollary 2.22]{CF}. To prove $(d)$, noting that when $p_+<\infty$ and $0<\lambda\leq 1$, we have $$\int_{\mathbb{R}^n}\bigg[\frac{\rho_x(\mathbf{f}(x))}{\lambda}\bigg]^{p(x)}dx\leq \int_{\mathbb{R}^n}[\rho_x(\mathbf{f}(x))]^{p(x)}\lambda^{-p_+}dx\leq 1,$$ which implies $\|\mathbf{f}\|_{L^{p(\cdot)}(\rho)}\leq \lambda$.
\end{proof}

Based on Lemma \ref{lem-v}, we now present a Kolmogorov-Riesz theorem in $L^{p(\cdot)}(\rho)$.
\begin{thm}\label{thm-KRLrhov}
Let $p(\cdot)\in \mathcal{P}(\mathbb{R}^n)$, $p_+<\infty$ and $\rho:=\{\rho_x\}_{x\in \mathbb{R}^n}$ be a family of norms on $\mathbb{C}^d$ such that $W_x$ is an invertible matrix weight satisfying \eqref{eq-2.3} for every $x\in \mathbb{R}^n$ and $\|W_x\|^{p_+}_{op}\in L^1_{\rm{loc}}(\mathbb{R}^n)$. A subset $\mathcal{F}\subset L^{p(\cdot)}(\rho)$ is totally bounded if the following conditions are valid:
\begin{enumerate}
\item[$(a)$] $\mathcal{F}$ is bounded in the sense of the modular, that is, $$\sup_{\mathbf{f}\in \mathcal{F}}\int_{\mathbb{R}^n}[\rho_x(\mathbf{f}(x))]^{p(x)}dx<\infty;$$
\item[$(b)$] $\mathcal{F}$ uniformly vanishes at infinity, that is, $$\lim_{R\to \infty}\sup_{\mathbf{f}\in \mathcal{F}}\int_{B^c(0,R)}[\rho_x(\mathbf{f}(x))]^{p(x)}dx=0;$$
\item[$(c)$] $\mathcal{F}$ is equicontinuous, that is, $$\lim_{r\to 0}\sup_{\mathbf{f}\in \mathcal{F}}\sup_{y\in B(0,r)}\int_{\mathbb{R}^n}[\rho_x(\tau_y\mathbf{f}(x)-\mathbf{f}(x))]^{p(x)}dx=0.$$
\end{enumerate}
\end{thm}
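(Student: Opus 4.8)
The plan is to follow the Hanche-Olsen--Holden philosophy: for each $\varepsilon>0$ I will exhibit a totally bounded set $\mathcal{G}_\varepsilon\subset L^{p(\cdot)}(\rho)$ such that every $\mathbf f\in\mathcal F$ lies within $\varepsilon$ of $\mathcal{G}_\varepsilon$ in the Luxemburg norm; since total boundedness passes from such approximants to $\mathcal F$, this suffices. Throughout I would argue at the level of the modular and convert to the norm only at the very end via Lemma \ref{lem-v}$(d)$. The approximants will be piecewise-constant functions. First I would truncate: given $\varepsilon$, condition $(b)$ lets me pick $R$ so large that $\sup_{\mathbf f}\int_{B^c(0,R)}[\rho_x(\mathbf f(x))]^{p(x)}\,dx$ is as small as desired, so $\mathbf f$ and $\mathbf f\chi_{B(0,R)}$ are uniformly close in modular.

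Next, partition $B(0,R)$ into congruent cubes $\{Q_j\}_{j=1}^N$ of side $h$ and set $P_h\mathbf f:=\sum_j\big(\frac{1}{|Q_j|}\int_{Q_j}\mathbf f\big)\chi_{Q_j}$. The first key estimate is that $P_h$ approximates the identity in the modular, uniformly over $\mathcal F$. Writing $\mathbf f(z)-\mathbf f(x)=\tau_{x-z}\mathbf f(x)-\mathbf f(x)$ for $x,z\in Q_j$, using the triangle inequality for the norm $\rho_x$ and Jensen's inequality for $t\mapsto t^{p(x)}$, and then integrating and applying Fubini with the substitution $y=x-z$, I obtain
\[
\int_{B(0,R)}[\rho_x(P_h\mathbf f(x)-\mathbf f(x))]^{p(x)}\,dx\;\le\;c_n\sup_{|y|\le h\sqrt n}\int_{\mathbb R^n}[\rho_x(\tau_y\mathbf f(x)-\mathbf f(x))]^{p(x)}\,dx,
\]
which tends to $0$ as $h\to0$ uniformly over $\mathcal F$ by the equicontinuity $(c)$. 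Note this uses only that each $\rho_x$ is a norm; no mollification and no lower bound on $W_x$ is required.

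It remains to show that, for fixed $h$, the family $\{P_h\mathbf f:\mathbf f\in\mathcal F\}$ is totally bounded, and this is the crux. In the classical argument one would bound the cube averages in Euclidean size, but here the absence of any integrability hypothesis on $\|W_x^{-1}\|_{op}$ forbids passing from $\rho_x(\mathbf f)$ to $|\mathbf f|$, so I avoid this step entirely. Combining the previous estimate with $(a)$ and the quasi-triangle inequality $[\rho_x(\mathbf u+\mathbf v)]^{p(x)}\le 2^{p_+}([\rho_x(\mathbf u)]^{p(x)}+[\rho_x(\mathbf v)]^{p(x)})$ shows that the modulars $\int[\rho_x(P_h\mathbf f)]^{p(x)}\,dx$ are uniformly bounded by some $C_1<\infty$. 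Now $P_h\mathbf f=\sum_j\mathbf c_j(\mathbf f)\chi_{Q_j}$ is determined by $(\mathbf c_j(\mathbf f))_j\in\mathbb C^{dN}$, and for each $j$ the function $\mathbf c\mapsto m_j(\mathbf c):=\int_{Q_j}[\rho_x(\mathbf c)]^{p(x)}\,dx$ is coercive: since $\rho_x$ is a genuine norm, $\rho_x(\mathbf c)\ge\mu(x)|\mathbf c|$ with $\mu(x):=\min_{|\mathbf u|=1}\rho_x(\mathbf u)>0$ a.e.\ (indeed $\mu(x)\ge d^{-1/2}\|W_x^{-1}\|_{op}^{-1}>0$ by \eqref{eq-2.3}), so $m_j(\mathbf c)\ge\kappa_j|\mathbf c|^{p_-}$ for $|\mathbf c|\ge1$ with $\kappa_j:=\int_{Q_j}\mu(x)^{p(x)}\,dx>0$. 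Since $m_j(\mathbf c_j(\mathbf f))\le C_1$, each $\mathbf c_j(\mathbf f)$ lies in a fixed bounded subset of $\mathbb C^d$, so the coefficient vectors lie in a bounded, hence totally bounded, set $K\subset\mathbb C^{dN}$.

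Finally I would check that the reconstruction map $\Psi:(\mathbf c_j)_j\mapsto\sum_j\mathbf c_j\chi_{Q_j}$ is continuous from $\mathbb C^{dN}$ into $L^{p(\cdot)}(\rho)$: using $\rho_x(\mathbf v)\le\|W_x\|_{op}|\mathbf v|$ from \eqref{eq-2.3} and the hypothesis $\|W_x\|^{p_+}_{op}\in L^1_{\rm{loc}}(\mathbb R^n)$, the modular of $\Psi(\mathbf c)-\Psi(\mathbf c')$ is controlled by $(\max_j|\mathbf c_j-\mathbf c_j'|)^{p_-}\int_{B(0,R)}(1+\|W_x\|^{p_+}_{op})\,dx$, which is small; hence $\{P_h\mathbf f\}\subseteq\Psi(K)\subseteq\Psi(\overline K)$ is totally bounded. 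Assembling the truncation error and the projection error, each made small in modular and then converted to the Luxemburg norm through Lemma \ref{lem-v}$(d)$, shows that $\mathcal F$ lies within $\varepsilon$ of the totally bounded set $\Psi(\overline K)$; letting $\varepsilon\to0$ gives that $\mathcal F$ is totally bounded. The main obstacle is the finite-dimensional boundedness step, which I expect to resolve precisely through the coercivity of the cube-modular $m_j$ rather than through any comparison with the unweighted Euclidean norm.
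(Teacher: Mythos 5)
Your proposal is correct and follows essentially the same route as the paper's proof of Theorem \ref{thm-KRLrhov}: truncate via $(b)$, replace $\mathbf f$ by its averages over a partition into small cubes, control the projection error through the convexity of $\rho_x$, Jensen's inequality and Fubini played against condition $(c)$, reduce the family of approximants to a bounded subset of a finite-dimensional subspace of $L^{p(\cdot)}(\rho)$ (your explicit coercivity bound on the coefficients together with the continuity of the reconstruction map, using $\|W_x\|^{p_+}_{op}\in L^1_{\rm{loc}}(\mathbb{R}^n)$, is a more quantitative rendering of the paper's appeal to positive-definiteness of $W_x$ and norm equivalence on the finite-dimensional space $\mathcal B$), and finally pass from the modular to the Luxemburg norm via Lemma \ref{lem-v}. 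The only cosmetic differences are that the paper tiles $[-2^m,2^m)^n$ by dyadic cubes and routes the averaging estimate through the John ellipsoid \eqref{eq-2.3}, where you use the integral triangle inequality for the norm $\rho_x$ directly.
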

\begin{proof}
Assume that $\mathcal{F}\subset L^{p(\cdot)}(\rho)$ satisfies $(a)$--$(c)$. Given $\epsilon>0$ small enough, to prove the total boundedness of $\mathcal{F}$, it suffices to find a finite $\epsilon$-net of $\mathcal{F}$. Denote by $R_i:=[-2^i,2^i)^n$ for $i\in \mathbb{Z}$. Then by condition $(b)$, there exists a positive integer $m$ large enough such that
\begin{equation}\label{eq-1}
\sup_{\mathbf{f}\in \mathcal{F}}\int_{\mathbb{R}^n}\big[\rho_x\big(\mathbf{f}(x)-\mathbf{f}(x)\chi_{R_m}(x)\big)\big]^{p(x)}dx<\epsilon.\tag{3.1}
\end{equation}
Moreover, by condition $(c)$, there exists an integer $t$ such that
\begin{equation}\label{eq-2}
\sup_{\mathbf{f}\in \mathcal{F}}\sup_{y\in R_t}\int_{\mathbb{R}^n}\big[\rho_x\big(\mathbf{f}(x-y)-\mathbf{f}(x)\big)\big]^{p(x)}dx<\epsilon.\tag{3.2}
\end{equation}\par

Let $\mathcal{Q}_i$, $i\in \mathbb{Z}$, be the family of dyadic cubes in $\mathbb{R}^n$, open on the right, whose vertices are adjacent points of the lattice $(2^i\mathbb{Z})^n$. 
For each $i\in\mathbb Z$, the cubes in $\mathcal{Q}_i$ are either disjoint or coincide. Thus there exists a sequence $\{Q_j\}_{j=1}^N$ of disjoint cubes in $\mathcal{Q}_t$ such that
\begin{small}$R_m=\bigcup\limits_{j=1}^NQ_j$\end{small},
where \begin{small}$N=2^{(m+1-t)n}$\end{small} is a positive integer.\par

For any $\mathbf{f}\in \mathcal{F}$ and $x\in \mathbb{R}^n$, define $$\Phi(\mathbf{f})(x):=
\begin{cases}
\mathbf{f}_{Q_j}:=\frac{1}{|Q_j|}\int_{Q_j}\mathbf{f}(y)dy, & x\in Q_j,\ j=1,\cdots,N,\\
0, & \text{otherwise}.
\end{cases}
$$
Then for every fixed $x\in \mathbb{R}^n$, by Lemma \ref{lem-John}, there exists a positive-definite self-adjoint matrix $W_x$ such that for each $j$,
\begin{align*}
\rho_x\big(\big(\mathbf{f}(x)-\mathbf{f}_{Q_j}\big)\chi_{Q_j}(x)\big)&\leq\big|W_x\big(\mathbf{f}(x)-\mathbf{f}_{Q_j}\big)\chi_{Q_j}(x)\big|\\
&= \bigg|\frac{1}{|Q_j|}\int_{Q_j}W_x\big(\mathbf{f}(x)-\mathbf{f}(y)\big)dy\chi_{Q_j}(x)\bigg|\\
&\lesssim \frac{1}{|Q_j|}\int_{Q_j}\big|W_x\big(\mathbf{f}(x)-\mathbf{f}(y)\big)\big|dy\chi_{Q_j}(x)\\
&\lesssim \frac{1}{|Q_j|}\int_{Q_j}\rho_x\big(\mathbf{f}(x)-\mathbf{f}(y)\big)dy\chi_{Q_j}(x),
\end{align*}
where the implicit constant depends only on $d$. Then from the choices of $\{Q_j\}^N_{j=1}$, the Jensen inequality, the Fubini theorem and \eqref{eq-2}, it follows that
\begin{align}\label{eq-3}
&\int_{\mathbb{R}^n}\big[\rho_x\big(\mathbf{f}(x)\chi_{R_m}(x)-\Phi(\mathbf{f})(x)\big)\big]^{p(x)}dx\notag\\
&\quad\lesssim \sum_{j=1}^N\int_{Q_j}\bigg|\frac{1}{|Q_j|}\int_{Q_j}\rho_x\big(\mathbf{f}(x)-\mathbf{f}(y)\big)dy\bigg|^{p(x)}dx\notag\\
&\quad\lesssim \sum_{j=1}^N\frac{1}{|Q_j|}\int_{Q_j}\int_{Q_j}\big[\rho_x\big(\mathbf{f}(x)-\mathbf{f}(y)\big)\big]^{p(x)}dx\ dy\notag\\
&\quad\approx 2^{-nt}\sum_{j=1}^N \int_{Q_j}\int_{Q_j}\big[\rho_x\big(\mathbf{f}(x)-\mathbf{f}(y)\big)\big]^{p(x)}dy\ dx\notag\\
&\quad\approx 2^{-nt}\sum_{j=1}^N \int_{Q_j}\int_{x-Q_j}\big[\rho_x\big(\mathbf{f}(x)-\mathbf{f}(x-y)\big)\big]^{p(x)}dy\ dx\notag\\
&\quad\lesssim 2^{-nt}\int_{\mathbb{R}^n}\int_{R_t}\big[\rho_x\big(\mathbf{f}(x)-\mathbf{f}(x-y)\big)\big]^{p(x)}dy\ dx\notag\\
&\quad\approx 2^{-nt}\int_{R_t}\int_{\mathbb{R}^n}\big[\rho_x\big(\mathbf{f}(x)-\mathbf{f}(x-y)\big)\big]^{p(x)}dx\ dy\notag\\
&\quad\lesssim 2^{-nt}|R_t| \sup_{y\in R_t}\int_{\mathbb{R}^n}\big[\rho_x\big(\mathbf{f}(x)-\mathbf{f}(x-y)\big)\big]^{p(x)}dx\lesssim 2^n\epsilon,\tag{3.3}
\end{align}
where we use the fact that $x-Q_j:=\{x-y:y\in Q_j\}\subset R_t$ when $x\in Q_j$. Note that
\begin{align*}
&\int_{\mathbb{R}^n}\big[\rho_x\big(\mathbf{f}(x)-\Phi(\mathbf{f})(x)\big)\big]^{p(x)}dx\\
&\quad= \bigg[\int_{\mathbb{R}^n\backslash R_m}+\int_{R_m}\bigg]\big[\rho_x\big(\mathbf{f}(x)-\Phi(\mathbf{f})(x)\big)\big]^{p(x)}dx\\
&\quad= \int_{\mathbb{R}^n}\big[\rho_x\big(\mathbf{f}(x)-\mathbf{f}(x)\chi_{R_m}(x)\big)\big]^{p(x)}dx+ \int_{\mathbb{R}^n}\big[\rho_x\big(\mathbf{f}(x)\chi_{R_m}(x)-\Phi(\mathbf{f})(x)\big)\big]^{p(x)}dx.
\end{align*}
This via \eqref{eq-1} and \eqref{eq-3} implies that
\begin{align}\label{eq-11}
\sup_{\mathbf{f}\in \mathcal{F}}\int_{\mathbb{R}^n}\big[\rho_x\big(\mathbf{f}(x)-\Phi(\mathbf{f})(x)\big)\big]^{p(x)}dx\lesssim \epsilon,\tag{3.4}
\end{align}
where the implicit constant depends only on $n$ and $d$. Since $p_+< \infty$, then from \eqref{eq-11} and Lemma \ref{lem-v}, it suffices to show that $\Phi(\mathcal{F})$ is totally bounded in $L^{p(\cdot)}(\rho)$.\par

Note that by condition $(a)$, we have
\begin{align*}
&\sup_{\mathbf{f}\in \mathcal{F}}\int_{\mathbb{R}^n}\big[\rho_x\big(\Phi(\mathbf{f})(x)\big)\big]^{p(x)}dx\\
&\quad\leq 2^{p_+-1}\bigg(\sup_{\mathbf{f}\in \mathcal{F}}\int_{\mathbb{R}^n}\big[\rho_x\big(\mathbf{f}(x)-\Phi(\mathbf{f})(x)\big)\big]^{p(x)}dx+ \sup_{\mathbf{f}\in \mathcal{F}}\int_{\mathbb{R}^n}\big[\rho_x\big(\mathbf{f}(x)\big)\big]^{p(x)}dx\bigg)<\infty.
\end{align*}
Then by \cite[Remark 2.10]{CF}, it follows that for any $\mathbf{f}\in \mathcal{F}$, $$|W_x\Phi(\mathbf{f})(x)|\leq d^{\frac{1}{2}}\rho_x\big(\Phi(\mathbf{f})(x)\big)<\infty\text{\ \ a.e.}\ x\in \mathbb{R}^n.$$ Since $W_x$ is positive-definite for every $x\in \mathbb{R}^n$, we obtain $$|\Phi(\mathbf{f})(x)|<\infty\text{\ \ a.e.}\ x\in \mathbb{R}^n,$$ which implies $|\mathbf{f}_{Q_j}|<\infty,\ j=1,\cdots,N.$ From this and $\|W_x\|^{p_+}_{op}\in L^1_{\rm{loc}}(\mathbb{R}^n)$, we see that $\Phi$ is a map from $\mathcal{F}$ to $\mathcal{B}$, a finite dimensional Banach subspace of $L^{p(\cdot)}(\rho)$. Notice that $\Phi({\mathcal{F}})\subset \mathcal{B}$ is bounded, and hence is totally bounded. The proof of Theorem \ref{thm-KRLrhov} is complete.
\end{proof}\par

As a corollary of Theorem \ref{thm-KRLrhov}, by taking $p(\cdot)\equiv p\in [1,\infty)$,
we have the following Kolmogorov-Riesz theorem in $L^p(\rho)$ defined in Section \ref{s2}.
\begin{cor}\label{cor-KRLrho}
Let $1\leq p<\infty$, $\rho:=\{\rho_x\}_{x\in \mathbb{R}^n}$ be a family of norms on $\mathbb{C}^d$ such that $W_x$ is an invertible matrix weight satisfying \eqref{eq-2.3} for every $x\in \mathbb{R}^n$ and $\|W_x\|^p_{op}\in L^1_{\rm{loc}}(\mathbb{R}^n)$. A subset $\mathcal{F}$ of $L^p(\rho)$ is totally bounded if the following conditions hold:
\begin{enumerate}
\item[$(a)$] $\mathcal{F}$ is bounded, i.e. $\sup\limits_{\mathbf{f}\in \mathcal{F}}\|\mathbf{f}\|_{L^p(\rho)}<\infty;$
\item[$(b)$] $\mathcal{F}$ uniformly vanishes at infinity, that is, $$\lim_{R\to \infty}\sup_{\mathbf{f}\in \mathcal{F}}\|\mathbf{f}\chi_{B^c(0,R)}\|_{L^p(\rho)}=0;$$
\item[$(c)$] $\mathcal{F}$ is equicontinuous, that is, $$\lim_{r\to 0}\sup_{\mathbf{f}\in \mathcal{F}}\sup_{y\in B(0,r)}\|\tau_y\mathbf{f}-\mathbf{f}\|_{L^p(\rho)}=0.$$
\end{enumerate}
\end{cor}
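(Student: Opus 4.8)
The plan is to deduce Corollary \ref{cor-KRLrho} directly from Theorem \ref{thm-KRLrhov} by specializing to the constant exponent $p(\cdot)\equiv p$. First I would observe that with $p(\cdot)\equiv p\in[1,\infty)$ one has $p_+=p_-=p<\infty$, so the standing hypothesis $p_+<\infty$ of the theorem is satisfied, and the requirement $\|W_x\|^{p_+}_{op}\in L^1_{\rm{loc}}(\mathbb{R}^n)$ becomes exactly $\|W_x\|^{p}_{op}\in L^1_{\rm{loc}}(\mathbb{R}^n)$, matching the corollary's hypothesis verbatim. The invertibility of $W_x$ and the John ellipsoid bound \eqref{eq-2.3} are identical in both statements, so all structural assumptions carry over with no extra work.

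The key elementary fact to record is that for a constant exponent the Luxemburg norm defining $L^{p(\cdot)}(\rho)$ collapses to the ordinary $L^p(\rho)$ norm of Section \ref{s2}. Indeed, by the homogeneity of the norm $\rho_x$ we have
\[
\int_{\mathbb{R}^n}\bigg[\frac{\rho_x(\mathbf{f}(x))}{\lambda}\bigg]^{p}dx=\lambda^{-p}\|\mathbf{f}\|^{p}_{L^p(\rho)},
\]
which is at most $1$ precisely when $\lambda\geq\|\mathbf{f}\|_{L^p(\rho)}$; hence the infimum defining the Luxemburg norm equals $\|\mathbf{f}\|_{L^p(\rho)}$, and $L^{p(\cdot)}(\rho)=L^p(\rho)$ isometrically. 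In particular the modular is exactly the $p$-th power of the norm, that is $\int_{\mathbb{R}^n}[\rho_x(\mathbf{g}(x))]^{p}dx=\|\mathbf{g}\|^{p}_{L^p(\rho)}$ for every measurable $\mathbf{g}$.

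Applying this identity with $\mathbf{g}=\mathbf{f}$, with $\mathbf{g}=\mathbf{f}\chi_{B^c(0,R)}$, and with $\mathbf{g}=\tau_y\mathbf{f}-\mathbf{f}$ respectively, the three modular conditions (a)--(c) of Theorem \ref{thm-KRLrhov} are exactly the $p$-th powers of the norm conditions (a)--(c) of the corollary. Since $x\mapsto x^{1/p}$ is a homeomorphism of $[0,\infty)$, finiteness of the supremum and vanishing of the limits are preserved in both directions, so hypotheses (a)--(c) of the corollary translate into hypotheses (a)--(c) of the theorem. Theorem \ref{thm-KRLrhov} then yields at once that $\mathcal{F}$ is totally bounded in $L^{p(\cdot)}(\rho)=L^p(\rho)$, completing the proof. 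I do not anticipate any genuine obstacle: the argument is a pure specialization, and the only point requiring a moment of care is the reduction of the Luxemburg norm to the $L^p(\rho)$ norm for constant $p$, which the displayed computation settles immediately.
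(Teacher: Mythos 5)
Your proposal is correct and matches the paper's route exactly: the paper obtains Corollary \ref{cor-KRLrho} simply by specializing Theorem \ref{thm-KRLrhov} to $p(\cdot)\equiv p$, and your explicit verification that the Luxemburg norm reduces to the $L^p(\rho)$ norm and that the modular conditions are the $p$-th powers of the norm conditions is precisely the (unstated) content of that specialization.
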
\par

As an application of Corollary \ref{cor-KRLrho}, by setting $\rho_x(\cdot):=|W^{\frac{1}{p}}(x)\cdot|$ and \eqref{eq-2.1}, we have the following compactness criterion in $L^p(W)$ for $p\in[1,\infty)$.
\begin{cor}\label{cor-mwKRL}
Let $1\leq p<\infty$, $W$ be an invertible matrix weight. A subset $\mathcal{F}$ of $L^p(W)$ is totally bounded if the following conditions hold:
\begin{enumerate}
\item[$(a)$] $\mathcal{F}$ is bounded, i.e. $\sup\limits_{\mathbf{f}\in \mathcal{F}}\|\mathbf{f}\|_{L^p(W)}<\infty;$
\item[$(b)$] $\mathcal{F}$ uniformly vanishes at infinity, that is, $$\lim_{R\to \infty}\sup_{\mathbf{f}\in \mathcal{F}}\|\mathbf{f}\chi_{B^c(0,R)}\|_{L^p(W)}=0;$$
\item[$(c)$] $\mathcal{F}$ is equicontinuous, that is, $$\lim_{r\to 0}\sup_{\mathbf{f}\in \mathcal{F}}\sup_{y\in B(0,r)}\|\tau_y\mathbf{f}-\mathbf{f}\|_{L^p(W)}=0.$$
\end{enumerate}
\end{cor}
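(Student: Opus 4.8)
The plan is to realize $L^p(W)$ as a special instance of the space $L^p(\rho)$ treated in Corollary \ref{cor-KRLrho} and then simply transfer the hypotheses. First I would define the family of norms $\rho:=\{\rho_x\}_{x\in\mathbb{R}^n}$ by $\rho_x(\mathbf{v}):=|W^{\frac{1}{p}}(x)\mathbf{v}|$ for $\mathbf{v}\in\mathbb{C}^d$. Since $W$ is an invertible matrix weight, $W(x)$ is positive-definite for a.e.\ $x$, hence so is $W^{\frac{1}{p}}(x)$, and therefore each $\rho_x$ is genuinely a norm on $\mathbb{C}^d$; measurability of $x\mapsto\rho_x(\mathbf{f}(x))$ follows from the measurability of $W^{\frac{1}{p}}$ recorded just after Lemma \ref{lem-diag}. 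With this choice one has, directly from the definitions,
\[
\|\mathbf{f}\|_{L^p(\rho)}^p=\int_{\mathbb{R}^n}[\rho_x(\mathbf{f}(x))]^p\,dx=\int_{\mathbb{R}^n}\big|W^{\frac{1}{p}}(x)\mathbf{f}(x)\big|^p\,dx=\|\mathbf{f}\|_{L^p(W)}^p,
\]
so that $L^p(W)$ and $L^p(\rho)$ coincide isometrically and conditions $(a)$, $(b)$, $(c)$ of the present statement are word-for-word the corresponding conditions in Corollary \ref{cor-KRLrho}.

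The second step is to check the structural hypotheses that Corollary \ref{cor-KRLrho} imposes on $\rho$. Rather than invoking the John ellipsoid theorem (Lemma \ref{lem-John}) to manufacture an auxiliary matrix, I would simply take $W_x:=W^{\frac{1}{p}}(x)$, which is already positive-definite and self-adjoint. Then \eqref{eq-2.3} holds trivially: the left-hand inequality $\rho_x(\mathbf{v})\le|W_x\mathbf{v}|$ is an equality by construction, and the right-hand inequality $|W_x\mathbf{v}|\le d^{\frac{1}{2}}\rho_x(\mathbf{v})$ holds because $d^{\frac{1}{2}}\ge1$. It then remains to verify the integrability requirement $\|W_x\|_{op}^p\in L^1_{\rm{loc}}(\mathbb{R}^n)$. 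Here I would invoke \eqref{eq-2.1}: if $\lambda_i(x)$ denote the eigenvalues of $W(x)$, then $\|W^{\frac{1}{p}}(x)\|_{op}=\max_i\lambda_i^{\frac{1}{p}}(x)$, whence $\|W_x\|_{op}^p=\|W^{\frac{1}{p}}(x)\|_{op}^p=\max_i\lambda_i(x)=\|W(x)\|_{op}$, and the last quantity lies in $L^1_{\rm{loc}}(\mathbb{R}^n)$ precisely because $W$ is a matrix weight.

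With both the isometric identification and the structural hypotheses in hand, Corollary \ref{cor-KRLrho} applies to $\mathcal{F}\subset L^p(\rho)=L^p(W)$ and yields the total boundedness of $\mathcal{F}$, which completes the proof. I do not expect a genuine obstacle, since the argument is essentially a bookkeeping reduction; the single point that requires a computation rather than a definitional observation is the identity $\|W^{\frac{1}{p}}\|_{op}^p=\|W\|_{op}$, which is exactly where \eqref{eq-2.1} is used to secure the local integrability condition needed to invoke the previous corollary.
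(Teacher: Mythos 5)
Your proposal is correct and follows exactly the route the paper intends: the paper derives Corollary \ref{cor-mwKRL} from Corollary \ref{cor-KRLrho} precisely by setting $\rho_x(\cdot):=|W^{\frac{1}{p}}(x)\cdot|$ (so that $W_x=W^{\frac{1}{p}}(x)$ serves as the matrix in \eqref{eq-2.3}) and using \eqref{eq-2.1} to get $\|W_x\|_{op}^p=\|W\|_{op}\in L^1_{\rm{loc}}(\mathbb{R}^n)$. Your write-up merely makes explicit the bookkeeping that the paper leaves as a one-sentence remark.
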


\begin{rk}\label{rk-KRLrho}
It is worth noting that compared with the compactness criteria in \cite{CC,XYY,COY}, there is no additional assumption on $W$. Based on this, from the fact that matrix weighted Lebesgue spaces are not translation invariant, on which the translation operator $\tau_y$ is not continuous, it follows that Corollary \ref{cor-mwKRL} is a strong sufficient condition for precompactness in $L^p(W)$.
\end{rk}\par

When $p\in (0,\infty)$ and $W$ is a matrix weight which is not necessarily invertible, we also have the following characterization of totally bounded sets in $L^p(W)$, which extends Theorem \ref{thm-GZ} to the setting of matrix weights.
\begin{thm}\label{thm-mwKRL}
Let $0<p<\infty$, $W$ be a matrix weight. A subset $\mathcal{F}$ of $L^p(W)$ is totally bounded if it satisfies the conditions $(a),(b)$ in Corollary \ref{cor-mwKRL} and
\begin{enumerate}
\item[$(c^*)$] $\mathcal{F}$ is equicontinuous in the sense that $$\lim_{r\to 0}\sup_{\mathbf{f}\in \mathcal{F}}\sup_{y\in B(0,r)}\bigg(\int_{\mathbb{R}^n}\Big|D^{\frac{1}{p}}(x)\Big(U^H(x-y)\mathbf{f}(x-y)-U^H(x)\mathbf{f}(x)\Big)\Big|^pdx\bigg)^{\frac{1}{p}}=0.$$
\end{enumerate}
Here, $D^{\frac{1}{p}}:=U^HW^{\frac{1}{p}}U$ is a matrix weight by Lemma \ref{lem-diag}.
\end{thm}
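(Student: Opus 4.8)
The plan is to diagonalize $W$ and thereby reduce the matrix-weighted statement to finitely many \emph{scalar}-weighted statements, each handled by Theorem~\ref{thm-GZ}. First, using Lemma~\ref{lem-diag} I would write $W^{\frac1p}(x)=U(x)D^{\frac1p}(x)U^H(x)$, where $U(x)$ is unitary and $D^{\frac1p}(x)=D(\lambda_1^{1/p}(x),\dots,\lambda_d^{1/p}(x))$ is diagonal, $\lambda_i$ being the eigenvalue functions of $W$. Since $W$ is a matrix weight, $\max_i\lambda_i=\|W\|_{op}\in L^1_{\rm{loc}}(\mathbb R^n)$, so each $\lambda_i$ is a nonnegative function in $L^1_{\rm{loc}}(\mathbb R^n)$, i.e.\ a scalar weight in the sense of Theorem~\ref{thm-GZ}. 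I would then introduce the map $T\mathbf f:=U^H\mathbf f$. Because $U(x)$ is unitary, $|W^{\frac1p}(x)\mathbf f(x)|=|D^{\frac1p}(x)U^H(x)\mathbf f(x)|$ pointwise, so $T$ is a bijective isometry from $L^p(W)$ onto $L^p(D)$. As total boundedness is invariant under isometries, it suffices to prove that $T\mathcal F$ is totally bounded in $L^p(D)$.

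Next I would translate the hypotheses to $\mathbf g:=T\mathbf f$. Conditions $(a)$ and $(b)$ transfer to $T\mathcal F$ verbatim, since $T$ is an isometry commuting with multiplication by the scalar $\chi_{B^c(0,R)}$. The essential observation is that $(c^*)$ is exactly the equicontinuity of $T\mathcal F$: because $U^H(x-y)\mathbf f(x-y)=\mathbf g(x-y)=\tau_y\mathbf g(x)$ and $U^H(x)\mathbf f(x)=\mathbf g(x)$, the integrand in $(c^*)$ equals $|D^{\frac1p}(x)(\tau_y\mathbf g(x)-\mathbf g(x))|^p$, so $(c^*)$ reads $\lim_{r\to0}\sup_{\mathbf g\in T\mathcal F}\sup_{y\in B(0,r)}\|\tau_y\mathbf g-\mathbf g\|_{L^p(D)}=0$. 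Building the shift into the unitary factor is precisely what sidesteps both the possible non-invertibility of $W$ and the failure of $x\mapsto U^H(x)$ to commute with translations.

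For the diagonal weight $D$ I would then pass to scalars. Since all norms on $\mathbb C^d$ are equivalent with constants depending only on $d$ and $p$,
$$\big|D^{\frac1p}(x)\mathbf h(x)\big|^p=\Big(\sum_{i=1}^d\lambda_i^{2/p}(x)|h_i(x)|^2\Big)^{p/2}\approx\sum_{i=1}^d\lambda_i(x)|h_i(x)|^p,$$
so $\|\mathbf h\|_{L^p(D)}^p\approx\sum_{i=1}^d\|h_i\|_{L^p(\lambda_i)}^p$. Applying this with $\mathbf h=\mathbf g$, $\mathbf h=\mathbf g\chi_{B^c(0,R)}$, and $\mathbf h=\tau_y\mathbf g-\mathbf g$ shows that each coordinate family $\mathcal G_i:=\{g_i:\mathbf g\in T\mathcal F\}$ satisfies conditions $(a)$, $(b)$, $(c)$ of Theorem~\ref{thm-GZ} in the scalar space $L^p(\lambda_i)$, so each $\mathcal G_i$ is totally bounded. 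Finally, since $\|\cdot\|_{L^p(D)}^p$ is comparable to the sum of the $\|\cdot\|_{L^p(\lambda_i)}^p$ over the finite set $\{1,\dots,d\}$, an $\epsilon$-net for $T\mathcal F$ is produced from the product of $\epsilon$-nets for the $\mathcal G_i$; this yields total boundedness of $T\mathcal F$ in $L^p(D)$, hence of $\mathcal F$ in $L^p(W)$.

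The hard part will be the two conceptual steps rather than any long computation: recognizing that the strange-looking hypothesis $(c^*)$ is precisely ordinary equicontinuity after the unitary substitution $\mathbf g=U^H\mathbf f$, and justifying the comparison between the matrix-weighted modular and the sum of scalar modulars with constants independent of $\mathbf h$ (with the usual care, for $0<p<1$, that it is $\|\cdot\|^p$ and not $\|\cdot\|$ that furnishes the metric). Once these are in place, the result follows from $d$ applications of Theorem~\ref{thm-GZ} together with the elementary fact that total boundedness in a finite product is equivalent to total boundedness of each factor.
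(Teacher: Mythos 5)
Your proposal is correct and follows essentially the same route as the paper's proof: diagonalize $W$ via Lemma \ref{lem-diag}, pass to $\tilde{\mathbf f}=U^H\mathbf f$ in $L^p(D)$ (where $(c^*)$ becomes ordinary equicontinuity), reduce to the scalar families $\{\tilde f_i\}$ in $L^p(\lambda_i)$ via the equivalence of norms on $\mathbb{C}^d$, apply Theorem \ref{thm-GZ} coordinatewise, and assemble the finite $\epsilon$-net from the coordinate nets. The only cosmetic difference is that you invoke the full two-sided comparison $\|\mathbf h\|_{L^p(D)}^p\approx\sum_i\|h_i\|_{L^p(\lambda_i)}^p$ throughout, whereas the paper uses the one-sided bound $|D^{1/p}\tilde{\mathbf f}|\geq\lambda_i^{1/p}|\tilde f_i|$ to transfer the hypotheses and the equivalence only at the final step.
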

\begin{proof}
We use some ideas from \cite[Proposition 3.6]{CMR}. Assume that $\mathcal{F}\subset L^p(W)$ satisfies $(a)$--$(c)$. For any $\mathbf{f}\in L^p(W)$, denote by $\tilde{\mathbf{f}}:=U^H\mathbf{f}$ and $\tilde{\mathcal{F}}:=\{\tilde{\mathbf{f}}\}_{\mathbf{f}\in \mathcal{F}}$. Then condition $(c^*)$ is equivalent to the  equicontinuity of $\tilde{\mathcal{F}}\subset L^p(D)$, that is, $$\lim_{r\to 0}\sup_{\tilde{\mathbf{f}}\in \tilde{\mathcal{F}}}\sup_{y\in B(0,r)}\|\tau_y\tilde{\mathbf{f}}-\tilde{\mathbf{f}}\|_{L^p(D)}=0.$$
Moreover, from the orthogonality of $U$, it follows that $$\big|D^{\frac{1}{p}}\tilde{\mathbf{f}}\big|=\big|U^HW^{\frac{1}{p}}UU^H\mathbf{f}\big|=\big|W^{\frac{1}{p}}\mathbf{f}\big|.$$
This via $(a)$ and $(b)$ for $\mathcal{F}$ shows that $\tilde{\mathcal{F}}\subset L^p(D)$ is also bounded and uniformly vanishes at infinity. Observe that  if $\tilde{\mathcal{F}}\subset L^p(D)$ is totally bounded, so does $\mathcal{F}\subset L^p(W)$ (indeed, the converse is also true). It suffices to verify the total boundedness of $\tilde{\mathcal{F}}\subset L^p(D)$, that is, to find a finite $\epsilon$-net of $\tilde{\mathcal{F}}$ for each fixed $\epsilon>0$.\par

Note that for every $1\leq i\leq d$, we have $|D^{\frac{1}{p}}\tilde{\mathbf{f}}|=|D(\lambda^{\frac{1}{p}}_i)\tilde{\mathbf{f}}|\geq \lambda^{\frac{1}{p}}_i|\tilde{f}_i|,$ which implies that $\{\tilde{f}_i\}_{\mathbf{f}\in \mathcal{F}}\subset L^p(\lambda_i)$ satisfies conditions $(a)$--$(c)$ of Theorem \ref{thm-GZ} with $\omega$ replaced by $\lambda_i$, where $\lambda_i\in L^1_{\rm{loc}}(\mathbb{R}^n)$ is a nonnegative eigenvalue function and $\tilde{\mathbf{f}}:=(\tilde{f}_1,\cdots,\tilde{f}_d)^T$. Then we obtain that $\{\tilde{f}_i\}_{\mathbf{f}\in \mathcal{F}}\subset L^p(\lambda_i)$ is totally bounded by Theorem \ref{thm-GZ}.\par

Hence, given $\epsilon>0$, for every $\mathbf{f}\in \mathcal{F}$ and $1\leq i\leq d$, there exists $g_i\in L^p(\lambda_i)$ such that $\|\tilde{f}_i-g_i\|_{L^p(\lambda_i)}<\epsilon$. Let $\mathbf{g}:=(g_1,\cdots,g_d)^T$. Then by the equivalence of norms in $\mathbb{C}^d$ and our choice of the $g_i$'s, we have
\begin{align*}
\big\|\tilde{\mathbf{f}}-\mathbf{g}\big\|_{L^p(D)}\approx \sum^d_{i=1} \|\tilde{f}_i-g_i\|_{L^p(\lambda_i)}\lesssim \epsilon,
\end{align*}
where implicit constants depend only on $p$ and $d$. Finally, by the total boundedness of $\{\tilde{f}_i\}_{\mathbf{f}\in \mathcal{F}}$, we conclude that $\{\mathbf{g}\}\subset L^p(D)$ is a finite $\epsilon$-net of $\tilde{\mathcal{F}}$. This completes the proof of Theorem \ref{thm-mwKRL}.
\end{proof}\par

\begin{rk}\label{rk-mwKRL}
When $d=1$ and $W(x)=\omega(x)$, we have $D^{\frac{1}{p}}(x)=\omega^{\frac{1}{p}}(x)$ and $U(x)=1$. In this case, both Theorem \ref{thm-mwKRL} $(c^*)$ and Corollary \ref{cor-mwKRL} $(c)$ become Theorem \ref{thm-GZ} $(c)$.
\end{rk}

Finally, we end this section with an application in degenerate Sobolev spaces with matrix weights. Let $W$ be an invertible matrix weight and set $v:=\|W\|_{op}$. For $p\in [1,\infty)$, the degenerate Sobolev space $\mathcal{W}^{1,p}_W(\mathbb{R}^n)$ due to Cruz-Uribe--Moen--Rodney \cite{CMR} is defined as the set of all $f\in \mathcal{W}^{1,1}_{\rm{loc}}(\mathbb{R}^n)$ such that
\begin{equation*}
\|f\|_{\mathcal{W}^{1,p}_W(\mathbb{R}^n)}:=\|f\|_{L^p(v)}+\|\nabla f\|_{L^p(W)}<\infty,
\end{equation*}
where $\nabla f$ is the gradient of $f$. The degenerate Sobolev space $\mathcal{W}^{1,p}_W(\mathbb{R}^n)$ is an extension of scalar weighted Sobolev spaces. From Theorem \ref{thm-GZ} and Corollary \ref{cor-mwKRL}, we have the following compactness criterion in $\mathcal{W}^{1,p}_W(\mathbb{R}^n)$, which is an extension of \cite[Corollary 9]{HH} and \cite[Theorem 12]{AU}.

\begin{cor}\label{cor-s}
A subset $\mathcal{F}\subset \mathcal{W}^{1,p}_W(\mathbb{R}^n)$ is totally bounded if the following conditions hold:
\begin{enumerate}
\item[$(a)$] $\mathcal{F}$ is bounded, i.e. $\sup\limits_{f\in \mathcal{F}}\|f\|_{\mathcal{W}^{1,p}_W(\mathbb{R}^n)}<\infty;$
\item[$(b)$] $\mathcal{F}$ uniformly vanishes at infinity, that is, $$\lim_{R\to \infty}\sup_{f\in \mathcal{F}}\|f\chi_{B^c(0,R)}\|_{\mathcal{W}^{1,p}_W(\mathbb{R}^n)}=0;$$
\item[$(c)$] $\mathcal{F}$ is equicontinuous, that is, $$\lim_{r\to 0}\sup_{f\in \mathcal{F}}\sup_{y\in B(0,r)}\|\tau_yf-f\|_{\mathcal{W}^{1,p}_W(\mathbb{R}^n)}=0.$$
\end{enumerate}
\end{cor}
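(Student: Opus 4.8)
The plan is to realize the degenerate Sobolev space as a subspace of a product of a scalar weighted and a matrix weighted Lebesgue space, and then to reduce the total boundedness of $\mathcal{F}$ to that of two associated families, one living in $L^p(v)$ and the other in $L^p(W)$. Concretely, I would consider the map $T\colon f\mapsto (f,\nabla f)$, which sends $\mathcal{W}^{1,p}_W(\mathbb{R}^n)$ isometrically into the product space $L^p(v)\oplus L^p(W)$ equipped with the sum norm $\|(g,\mathbf{h})\|:=\|g\|_{L^p(v)}+\|\mathbf{h}\|_{L^p(W)}$, since $\|f\|_{\mathcal{W}^{1,p}_W(\mathbb{R}^n)}=\|Tf\|$ by the very definition of the Sobolev norm. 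Thus $\mathcal{F}$ is totally bounded in $\mathcal{W}^{1,p}_W(\mathbb{R}^n)$ if and only if $T(\mathcal{F})$ is totally bounded in $L^p(v)\oplus L^p(W)$.

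First I would record the elementary fact that a subset $A$ of a product of metric spaces, endowed with the sum metric, is totally bounded provided both of its coordinate projections are: if $\{g_k\}$ is a finite $\tfrac{\epsilon}{2}$-net of the first projection and $\{\mathbf{h}_\ell\}$ a finite $\tfrac{\epsilon}{2}$-net of the second, then $\{(g_k,\mathbf{h}_\ell)\}$ is a finite $\epsilon$-net of $A$ in the sum metric. Applying this to $A=T(\mathcal{F})$, it suffices to prove that $\mathcal{F}_0:=\{f:f\in\mathcal{F}\}$ is totally bounded in $L^p(v)$ and that $\mathcal{F}_1:=\{\nabla f:f\in\mathcal{F}\}$ is totally bounded in $L^p(W)$.

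For $\mathcal{F}_0$, I would invoke Theorem \ref{thm-GZ} with the scalar weight $\omega=v=\|W\|_{op}$, which is a nonnegative function in $L^1_{\rm{loc}}(\mathbb{R}^n)$ precisely because $W$ is a matrix weight. Conditions $(a)$--$(c)$ of Theorem \ref{thm-GZ} for $\mathcal{F}_0$ follow by reading off the $L^p(v)$-component of the hypotheses $(a)$--$(c)$ of the corollary, using $\|f\|_{L^p(v)}\leq \|f\|_{\mathcal{W}^{1,p}_W(\mathbb{R}^n)}$. For $\mathcal{F}_1$, I would apply Corollary \ref{cor-mwKRL} with the invertible matrix weight $W$. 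Here the decisive simplification is that the gradient commutes with translation, $\nabla(\tau_y f)=\tau_y(\nabla f)$, so that the equicontinuity of $\mathcal{F}_1$ in $L^p(W)$ is exactly the $L^p(W)$-component of hypothesis $(c)$; the boundedness and the uniform vanishing at infinity of $\mathcal{F}_1$ likewise come from the $L^p(W)$-components of $(a)$ and $(b)$. Once both $\mathcal{F}_0$ and $\mathcal{F}_1$ are shown to be totally bounded, the product criterion yields the total boundedness of $T(\mathcal{F})$, hence of $\mathcal{F}$.

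The point requiring care is the bookkeeping that decouples the single Sobolev-norm hypotheses of the corollary into separate conditions on $f$ in $L^p(v)$ and on $\nabla f$ in $L^p(W)$. For the equicontinuity this is transparent, since differentiation commutes with the translation operator; for the vanishing at infinity one should read the truncation through the product realization, so that the relevant gradient piece is $(\nabla f)\chi_{B^c(0,R)}$, which is exactly the $L^p(W)$-quantity controlled by hypothesis $(b)$. With this convention fixed, no estimates beyond the two cited compactness theorems and the product reduction are needed.
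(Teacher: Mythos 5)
Your proposal is correct and follows essentially the same route as the paper: both reduce the statement to the total boundedness of $\mathcal{F}$ in $L^p(v)$ via Theorem~\ref{thm-GZ} and of $\nabla(\mathcal{F})$ in $L^p(W)$ via Corollary~\ref{cor-mwKRL}, using that translation commutes with the gradient and reading the truncation componentwise. The only (immaterial) difference is in how the two conclusions are recombined: you use a product-of-$\epsilon$-nets argument in $L^p(v)\oplus L^p(W)$, while the paper invokes the equivalence of total boundedness with Cauchy-precompactness and extracts successive Cauchy subsequences.
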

\begin{proof}
First, note that $\mathcal{F}\subset \mathcal{W}^{1,p}_W(\mathbb{R}^n)$ satisfies $(a)$--$(c)$ if and only if $\mathcal{F}\subset L^p(v)$ satisfies $(a)$--$(c)$ of Theorem \ref{thm-GZ} and $\nabla(\mathcal{F}):=\{\nabla f\}_{f\in \mathcal{F}}\subset L^p(W)$ satisfies $(a)$--$(c)$ of Corollary \ref{cor-mwKRL}. Then since $W$ is an invertible matrix weight, by Theorem \ref{thm-GZ} and Corollary \ref{cor-mwKRL}, we obtain that both $\mathcal{F}\subset L^p(v)$ and $\nabla(\mathcal{F})\subset L^p(W)$ are totally bounded. Corollary \ref{cor-s} then follows from the fact that a set in metric spaces is totally bounded if and only if it is Cauchy-precompact, that is, every sequence admits a Cauchy subsequence; see \cite[p. 262]{W}.
\end{proof}

\section{Compactness criteria on metric measure spaces}\label{s4}
This section is devoted to the study of totally bounded sets in matrix weighted Lebesgue spaces on metric measure spaces. We begin with some basic facts about metric measure spaces in \cite{GG,GR}.\par

Let $(X,d,\mu)$ be a metric measure space equipped with a metric $d$ and a positive Borel regular measure $\mu$. Let $$B(x,r):=\{y\in X: d(x,y)<r\}$$ be the ball of the radius $r>0$ with center $x\in X$. We assume that the measure of every open nonempty set is strictly positive, and that the measure of every bounded set is finite.\par

\begin{df}\label{df-proper}
A metric space is proper if every closed bounded set is compact.
\end{df}

We remark that since every bounded set in a geometrically doubling metric space is totally bounded (see \cite[Lemma 2.3]{H}), a geometrically doubling metric space is proper if and only if it is complete via the Hausdorff criterion.
\begin{df}\label{df-mcm}
Let $(X,d,\mu)$ be a metric measure space. The measure $\mu$ is said to be continuous with respect to the metric $d$ if for any $x\in X$ and $r>0$ the following condition is valid: $$\lim_{y\to x}\mu[B(x,r)\Delta B(y,r)]=0,$$ where $A\Delta B$ stands for the symmetric difference of sets $A$ and $B$. We call such a measure metrically continuous for short, when no confusions can rise.
\end{df}

From Definition \ref{df-mcm}, we have the following lemma.
\begin{lem}\label{lem-mcmlup}
If $\mu$ is metrically continuous, then for every compact set $K\subset X$ and $r>0$, $$\inf_{x\in K}\mu[B(x,r)]>0.$$
\end{lem}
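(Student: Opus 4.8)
The plan is to reduce the statement to a standard compactness argument by showing that the map $x\mapsto \mu[B(x,r)]$ is continuous on $X$; once continuity is in hand, the infimum over the compact set $K$ is attained at some point where the value is positive, and we are done.

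First I would fix $r>0$ and set $g(x):=\mu[B(x,r)]$ for $x\in X$. Since every ball $B(x,r)$ is a bounded set, the standing assumption that bounded sets have finite measure guarantees that $g$ is a well-defined, finite-valued function. The elementary set-theoretic bound $|\mu[A]-\mu[B]|\le \mu[A\Delta B]$, which holds whenever $\mu[A]$ and $\mu[B]$ are finite (because $\mu[A]-\mu[B]\le \mu[A\setminus B]\le \mu[A\Delta B]$ and symmetrically with the roles of $A$ and $B$ exchanged), then yields
$$|g(x)-g(y)|=\big|\mu[B(x,r)]-\mu[B(y,r)]\big|\le \mu[B(x,r)\Delta B(y,r)].$$
By Definition \ref{df-mcm}, the right-hand side tends to $0$ as $y\to x$, so $g$ is continuous at every point of $X$.

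Next I would invoke compactness: since $K$ is compact and $g$ is continuous, $g$ attains its minimum on $K$ at some point $x_0\in K$, so that $\inf_{x\in K}\mu[B(x,r)]=\mu[B(x_0,r)]$. Because $B(x_0,r)$ is a nonempty open set, the standing assumption that $\mu$ is strictly positive on nonempty open sets forces $\mu[B(x_0,r)]>0$, and hence $\inf_{x\in K}\mu[B(x,r)]>0$, which is exactly the claim.

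I do not expect a serious obstacle here; the two points that genuinely require the hypotheses are the finiteness of $\mu$ on balls (needed to justify the symmetric-difference inequality, and therefore the continuity of $g$) and the attainment of the infimum, which is where compactness of $K$ enters in an essential way. It is worth emphasizing that compactness cannot be dropped: for a merely bounded or merely closed $K$ the continuous function $g$ could have infimum $0$ without attaining it, so the passage from pointwise positivity of $\mu[B(x,r)]$ to positivity of the infimum hinges precisely on replacing the infimum by a minimum.
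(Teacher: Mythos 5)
Your proof is correct and follows essentially the same route as the paper: establish continuity of $x\mapsto\mu[B(x,r)]$ via the symmetric-difference inequality and metric continuity of $\mu$, then apply the extreme value theorem on the compact set $K$ together with the standing assumption that nonempty open sets have positive measure. Your write-up is in fact slightly more explicit than the paper's about where finiteness of $\mu$ on balls and positivity on open sets are used.
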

\begin{proof}
First, since $\mu$ is metrically continuous, then from Definition \ref{df-mcm}, we deduce that for any $x,y\in X$ and $r>0$,
$$\big|\mu[B(x,r)]-\mu[B(y,r)]\big|\leq \mu[B(x,r)\Delta B(y,r)],$$
which implies that for any given $r>0$, the map $x\mapsto \mu[B(x,r)]$ is continuous. Lemma \ref{lem-mcmlup} then follows from the extreme value theorem.
\end{proof}\par

We now recall a vector-valued version of the classical Arzel\'{a}-Ascoli theorem; see \cite[Lemma 2.1]{GN} and \cite[Theorem 2]{HH}.
\begin{lem}\label{lem-AA}
Let $K$ be a compact topological space and $C(K,\mathbb{C}^d)$ be the space of $\mathbb{C}^d$-valued continuous functions on $K$ with the topology of uniform convergence. A subset $\mathcal{F}$ of $C(K,\mathbb{C}^d)$ is totally bounded if and only if the following conditions are valid:
\begin{enumerate}
\item[$(a)$] $\mathcal{F}$ is pointwise bounded, i.e. $\sup\limits_{\mathbf{f}\in \mathcal{F}}|\mathbf{f}(x)|<\infty,\ \forall\ x\in K;$
\item[$(b)$] $\mathcal{F}$ is equicontinuous, that is, for every $x\in K$ and $\epsilon >0$, there is a neighborhood $U$ of $x$ such that $$|\mathbf{f}(x)-\mathbf{f}(y)|<\epsilon,\ \forall\ y\in U,\ \mathbf{f}\in \mathcal{F}.$$
\end{enumerate}
\end{lem}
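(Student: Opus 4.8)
The plan is to prove both implications, treating the necessity of $(a)$ and $(b)$ as routine and the sufficiency (total boundedness from $(a)$ and $(b)$) as the substantive part. Throughout I would work with the uniform metric $\|\mathbf{f}-\mathbf{g}\|:=\sup_{x\in K}|\mathbf{f}(x)-\mathbf{g}(x)|$ on $C(K,\mathbb{C}^d)$, with respect to which total boundedness is measured.

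For necessity, I would suppose $\mathcal{F}$ is totally bounded and fix $\epsilon>0$, choosing a finite $\frac{\epsilon}{3}$-net $\{\mathbf{f}_1,\dots,\mathbf{f}_n\}\subset\mathcal{F}$. Given any $\mathbf{f}\in\mathcal{F}$ and the index $k$ with $\|\mathbf{f}-\mathbf{f}_k\|<\frac{\epsilon}{3}$, the bound $|\mathbf{f}(x)|\le\frac{\epsilon}{3}+\max_{1\le k\le n}|\mathbf{f}_k(x)|$ holds at every $x\in K$ and is independent of $\mathbf{f}$, which gives $(a)$. For $(b)$, after fixing $x$ I would use the continuity of each $\mathbf{f}_k$ to find neighborhoods $U_k$ of $x$ on which $|\mathbf{f}_k(x)-\mathbf{f}_k(y)|<\frac{\epsilon}{3}$, set $U:=\bigcap_{k=1}^n U_k$, and run a triangle inequality through the nearest net element to conclude $|\mathbf{f}(x)-\mathbf{f}(y)|<\epsilon$ for all $y\in U$ and all $\mathbf{f}\in\mathcal{F}$.

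The core of the proof is sufficiency. Assuming $(a)$ and $(b)$ and fixing $\epsilon>0$, I would first invoke $(b)$ to attach to each $x\in K$ an open neighborhood $U_x$ with $|\mathbf{f}(x)-\mathbf{f}(y)|<\frac{\epsilon}{4}$ for all $y\in U_x$ and all $\mathbf{f}\in\mathcal{F}$. Since $\{U_x\}_{x\in K}$ covers the compact space $K$, I extract a finite subcover $U_{x_1},\dots,U_{x_N}$. The decisive move is then to consider the sampling map $\mathbf{f}\mapsto(\mathbf{f}(x_1),\dots,\mathbf{f}(x_N))\in(\mathbb{C}^d)^N$: by $(a)$ its image is bounded in the finite-dimensional space $(\mathbb{C}^d)^N$, hence totally bounded, so $\mathcal{F}$ breaks into finitely many classes in which any two members $\mathbf{f},\mathbf{g}$ obey $|\mathbf{f}(x_j)-\mathbf{g}(x_j)|<\frac{\epsilon}{4}$ for every $j$. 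For an arbitrary $y\in K$ I would pick $j$ with $y\in U_{x_j}$ and estimate $|\mathbf{f}(y)-\mathbf{g}(y)|\le|\mathbf{f}(y)-\mathbf{f}(x_j)|+|\mathbf{f}(x_j)-\mathbf{g}(x_j)|+|\mathbf{g}(x_j)-\mathbf{g}(y)|<\frac{3\epsilon}{4}<\epsilon$; taking the supremum over $y$ shows that the members of a single class lie within $\epsilon$ of one another, so one representative per nonempty class furnishes the desired finite $\epsilon$-net.

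The step I expect to demand the most care is precisely this reduction of an infinite-dimensional total-boundedness problem to a finite-dimensional one: compactness of $K$ is what converts the pointwise control coming from equicontinuity into a finite list of sample points, and pointwise boundedness is what upgrades the boundedness of the sampled data into total boundedness in $(\mathbb{C}^d)^N$. The only difference from the classical scalar Arzel\'{a}-Ascoli theorem is that the target is $\mathbb{C}^d$ rather than $\mathbb{C}$; since $(\mathbb{C}^d)^N$ remains finite-dimensional and its bounded sets are totally bounded, this introduces no genuinely new obstacle, with all estimates carried out in the Euclidean norm $|\cdot|$ on $\mathbb{C}^d$.
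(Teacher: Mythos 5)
Your proof is correct, and the paper itself gives no proof of Lemma \ref{lem-AA} --- it merely recalls the statement with citations to \cite{GN} and \cite{HH} --- so there is nothing to diverge from; your argument (equicontinuity neighborhoods, a finite subcover of $K$, the sampling map into $(\mathbb{C}^d)^N$, and total boundedness of bounded sets in finite dimensions) is precisely the standard finite-dimensional reduction used in those references. The only points worth polishing are cosmetic: the neighborhoods from condition $(b)$ should be replaced by open neighborhoods before invoking compactness, and the finitely many ``classes'' should be obtained by covering the bounded image of the sampling map with sets of diameter less than $\frac{\epsilon}{4}$ in the max-norm on $(\mathbb{C}^d)^N$; both are routine.
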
\par

Now we give some necessary definitions and notations of matrix weights on metric measure spaces. A matrix function on $X$ is a map $W:X\to \mathcal{M}_d$. We say that it is $\mu$-measurable if each component of $W$ is a $\mu$-measurable function on $X$, and invertible if $\det W(x)\not=0$ $\mu$-a.e. and so $W^{-1}$ exists.\par

By a matrix weight on $X$ we mean a $\mu$-measurable matrix function $W:X\to \mathcal{S}_d$ such that $\|W\|_{op}\in L^1_{\rm{loc}}(X,d,\mu)$. Equivalently, each eigenvalue function $\lambda_i\in L^1_{\rm{loc}}(X,d,\mu),\ 1\leq i\leq d$. Let $\rho:=\{\rho_x\}_{x\in X}$ be a family of norms on $\mathbb{C}^d$, where for each $x\in X$, $\rho_x:\mathbb{C}^d\to \mathbb{R}^+$. Define the weighted Lebesgue space $L^p(\rho,\mu)$ for $p\in(0,\infty)$ be the class of all $\mu$-measurable vector-valued functions $\mathbf{f}:X\to \mathbb{C}^d$ such that
$$\|\mathbf{f}\|^p_{L^p(\rho,\mu)}:=\int_X[\rho_x(\mathbf{f}(x))]^pd\mu(x)<\infty,$$
where we always assume that $\rho_x(\mathbf{f}(x))$ is a $\mu$-measurable function on $X$ for any $\mu$-measurable vector-valued function $\mathbf{f}$.\par

Now we  present our main result in this section as follows, in which we replace the translation operator by the average operator and apply Lemma \ref{lem-AA}, inspired by \cite[Theorem 3.1]{GR}.
\begin{thm}\label{thm-KRrho}
Assume that $(X,d,\mu)$ is a proper metric measure space with a metrically continuous measure $\mu$. Let $1<p<\infty$, $\rho:=\{\rho_x\}_{x\in X}$ be a family of norms on $\mathbb{C}^d$ such that $W_x$ is an invertible matrix weight satisfying \eqref{eq-2.3} for every $x\in X$ and $\|W_x\|^p_{op},\|W^{-1}_x\|^{p'}_{op}\in L^1_{\rm{loc}}(X,d,\mu)$. A subset $\mathcal{F}$ of $L^p(\rho,\mu)$ is totally bounded if the following conditions hold:
\begin{enumerate}
\item[$(a)$] $\mathcal{F}$ is bounded, i.e. $\sup\limits_{\mathbf{f}\in \mathcal{F}}\|\mathbf{f}\|_{L^p(\rho,\mu)}<\infty;$
\item[$(b)$] $\mathcal{F}$ uniformly vanishes at infinity, that is, for some $x_0\in X$, $$\lim_{R\to \infty}\sup_{\mathbf{f}\in \mathcal{F}}\|\mathbf{f}\chi_{X\backslash B(x_0,R)}\|_{L^p(\rho,\mu)}=0;$$
\item[$(c)$] $\mathcal{F}$ is equicontinuous, that is, $$\lim_{r\to 0}\sup_{\mathbf{f}\in \mathcal{F}}\|S_r\mathbf{f}-\mathbf{f}\|_{L^p(\rho,\mu)}=0.$$
\end{enumerate}
Here, $S_r$ denotes the average operator: $$S_r\mathbf{f}(x):=\frac{1}{\mu[B(x,r)]}\int_{B(x,r)}\mathbf{f}(y)\ d\mu(y).$$
\end{thm}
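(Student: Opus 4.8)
The plan is to replace the translation argument of Theorem~\ref{thm-KRLrhov}, which is unavailable on a general metric measure space, by one built around the average operator $S_r$ together with the Arzel\'{a}-Ascoli theorem (Lemma~\ref{lem-AA}). Fix $\epsilon>0$. Using condition $(b)$ I would first choose $R>0$ so large that $\sup_{\mathbf f\in\mathcal F}\|\mathbf f\chi_{X\setminus B(x_0,R)}\|_{L^p(\rho,\mu)}<\epsilon$ and set $K:=\overline{B(x_0,R)}$, which is compact because $X$ is proper. Then, using condition $(c)$, I would fix one small $r>0$ with $\sup_{\mathbf f\in\mathcal F}\|S_r\mathbf f-\mathbf f\|_{L^p(\rho,\mu)}<\epsilon$. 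The triangle inequality gives, uniformly in $\mathbf f\in\mathcal F$,
\[
\|\mathbf f-(S_r\mathbf f)\chi_K\|_{L^p(\rho,\mu)}\le\|\mathbf f\chi_{X\setminus K}\|_{L^p(\rho,\mu)}+\|S_r\mathbf f-\mathbf f\|_{L^p(\rho,\mu)}<2\epsilon,
\]
so it suffices to show that $\{(S_r\mathbf f)\chi_K\}_{\mathbf f\in\mathcal F}$ is totally bounded in $L^p(\rho,\mu)$.

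The heart of the matter is to prove that $\{S_r\mathbf f|_K\}_{\mathbf f\in\mathcal F}$ is totally bounded in $C(K,\mathbb C^d)$, and I would do this by verifying the two hypotheses of Lemma~\ref{lem-AA}. The basic pointwise estimate, coming from the invertibility of $W_y$ and \eqref{eq-2.3}, is
\[
|\mathbf f(y)|\le\|W_y^{-1}\|_{op}\,|W_y\mathbf f(y)|\le d^{\frac12}\|W_y^{-1}\|_{op}\,\rho_y(\mathbf f(y)).
\]
Combining this with H\"older's inequality (this is where $1<p<\infty$ enters, so that $p'<\infty$) yields, for any measurable $E$,
\[
\int_E|\mathbf f(y)|\,d\mu(y)\le d^{\frac12}\Big(\int_E\|W_y^{-1}\|_{op}^{p'}\,d\mu(y)\Big)^{1/p'}\|\mathbf f\|_{L^p(\rho,\mu)}.
\]
Taking $E=B(x,r)$, using $\|W_y^{-1}\|_{op}^{p'}\in L^1_{\rm{loc}}$, the lower bound $\inf_{x\in K}\mu[B(x,r)]>0$ from Lemma~\ref{lem-mcmlup}, and condition $(a)$, I obtain a bound on $|S_r\mathbf f(x)|$ that is uniform for $x\in K$ and $\mathbf f\in\mathcal F$, which is pointwise boundedness.

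For equicontinuity, for $x,x'\in K$ I would write $S_r\mathbf f(x)-S_r\mathbf f(x')$ as
\[
\frac{1}{\mu[B(x,r)]}\Big(\int_{B(x,r)}\mathbf f\,d\mu-\int_{B(x',r)}\mathbf f\,d\mu\Big)+\Big(\int_{B(x',r)}\mathbf f\,d\mu\Big)\Big(\frac{1}{\mu[B(x,r)]}-\frac{1}{\mu[B(x',r)]}\Big),
\]
estimating the first numerator by $\int_{B(x,r)\Delta B(x',r)}|\mathbf f|\,d\mu$ and the difference of reciprocal measures through $|\mu[B(x,r)]-\mu[B(x',r)]|\le\mu[B(x,r)\Delta B(x',r)]$. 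By metric continuity of $\mu$ the quantity $\mu[B(x,r)\Delta B(x',r)]\to0$ as $x'\to x$, so absolute continuity of the integral forces $\int_{B(x,r)\Delta B(x',r)}\|W_y^{-1}\|_{op}^{p'}\,d\mu\to0$; feeding this into the H\"older estimate above and using condition $(a)$ yields equicontinuity of $\{S_r\mathbf f\}$ on $K$, uniformly over $\mathcal F$. Lemma~\ref{lem-AA} then shows $\{S_r\mathbf f|_K\}_{\mathbf f\in\mathcal F}$ is totally bounded in $C(K,\mathbb C^d)$.

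Finally I would transfer this back to $L^p(\rho,\mu)$. For $\mathbf g\in C(K,\mathbb C^d)$, the estimate $\rho_x(\mathbf g(x))\le|W_x\mathbf g(x)|\le\|W_x\|_{op}|\mathbf g(x)|$ gives
\[
\|\mathbf g\chi_K\|_{L^p(\rho,\mu)}^p\le\Big(\int_K\|W_x\|_{op}^p\,d\mu\Big)\|\mathbf g\|_\infty^p,
\]
and since $\|W_x\|_{op}^p\in L^1_{\rm{loc}}$ and $K$ is bounded, the linear map $\mathbf g\mapsto\mathbf g\chi_K$ from $C(K,\mathbb C^d)$ into $L^p(\rho,\mu)$ is bounded; hence it carries the totally bounded family above to the totally bounded family $\{(S_r\mathbf f)\chi_K\}_{\mathbf f\in\mathcal F}$. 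A finite $\epsilon$-net for the latter, combined with the first paragraph, produces a finite $3\epsilon$-net for $\mathcal F$. I expect the equicontinuity step to be the main obstacle: the delicate point is to upgrade the purely measure-theoretic smallness $\mu[B(x,r)\Delta B(x',r)]\to0$ supplied by metric continuity into smallness of $\int_{B(x,r)\Delta B(x',r)}|\mathbf f|\,d\mu$ that is \emph{uniform} over $\mathbf f\in\mathcal F$, which is precisely where the matrix-weighted H\"older inequality, the local integrability of $\|W_y^{-1}\|_{op}^{p'}$, and the uniform bound $(a)$ must be combined.
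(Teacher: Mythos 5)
Your proposal is correct and follows the same overall architecture as the paper's proof: reduce via $(b)$ and $(c)$ to the family $\{(S_r\mathbf f)\chi_{\bar B(x_0,R)}\}$, verify pointwise boundedness and equicontinuity on the compact ball, invoke Lemma \ref{lem-AA}, and transfer the finite net back to $L^p(\rho,\mu)$ through the factor $\big(\int_{\bar B(x_0,R)}\|W_x\|^p_{op}\,d\mu\big)^{1/p}$. The one place where you genuinely diverge is the step you yourself flag as the main obstacle, namely making $\int_{B(x,r)\Delta B(x',r)}|\mathbf f|\,d\mu$ small uniformly in $\mathbf f$. You apply the matrix-weighted H\"older inequality directly over the symmetric difference, so that the only $\mathbf f$-dependence is through $\|\mathbf f\|_{L^p(\rho,\mu)}$ (uniformly bounded by $(a)$), and then let absolute continuity of the finite measure $\|W_y^{-1}\|^{p'}_{op}\,d\mu$ on $B(x_0,2R)$ convert the metric continuity of $\mu$ into smallness of the remaining $\mathbf f$-independent factor. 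The paper instead inserts an auxiliary average $S_h\mathbf f$, controls $\int|\mathbf f-S_h\mathbf f|$ by a second appeal to hypothesis $(c)$, and bounds $\int_{B(x,r)\Delta B(x',r)}|S_h\mathbf f|$ by the sup bound on $S_h\mathbf f$ times $\mu[B(x,r)\Delta B(x',r)]$, which also requires Lemma \ref{lem-mcmlup} on the larger ball $\bar B(x_0,2R)$. Your route is shorter, uses hypothesis $(c)$ only once, and avoids the extra layer of averaging; the paper's route avoids invoking absolute continuity of the integral explicitly, at the cost of the $S_h$ detour. Both are valid, and both rely on $p>1$ in exactly the same way (so that $\|W^{-1}_x\|^{p'}_{op}\in L^1_{\rm loc}$ can be paired with $\|\mathbf f\|_{L^p(\rho,\mu)}$ via H\"older), consistent with Remark \ref{rk-KRrho}(i).
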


\begin{proof}
Assume that $\mathcal{F}\subset L^p(\rho,\mu)$ satisfies $(a)$--$(c)$. Given $\epsilon>0$, to prove the total boundedness of $\mathcal{F}$, it suffices to find a finite $\epsilon$-net of $\mathcal{F}$. By condition $(b)$, there exists $R>0$ such that
\begin{equation}\label{eq-4}
\sup_{\mathbf{f}\in \mathcal{F}}\|\mathbf{f}-\mathbf{f}\chi_{B(x_0,R)}\|_{L^p(\rho,\mu)}<\frac{\epsilon}{3}.\tag{4.1}
\end{equation}
Moreover, by condition $(c)$, there exists $r\in (0,R)$ such that
\begin{equation}\label{eq-5}
\sup_{\mathbf{f}\in \mathcal{F}}\|S_r\mathbf{f}-\mathbf{f}\|_{L^p(\rho,\mu)}<\frac{\epsilon}{3}.\tag{4.2}
\end{equation}
Then by \eqref{eq-4} and \eqref{eq-5}, we have
\begin{align*}
&\sup_{\mathbf{f}\in \mathcal{F}}\|(S_r\mathbf{f})\chi_{B(x_0,R)}-\mathbf{f}\|_{L^p(\rho,\mu)}\\
&\quad \leq \sup_{\mathbf{f}\in \mathcal{F}}\|(S_r\mathbf{f})\chi_{B(x_0,R)}-\mathbf{f}\chi_{B(x_0,R)}\|_{L^p(\rho,\mu)}+ \sup_{\mathbf{f}\in \mathcal{F}}\|\mathbf{f}-\mathbf{f}\chi_{B(x_0,R)}\|_{L^p(\rho,\mu)}<\frac{2\epsilon}{3}.
\end{align*}
So we only need to show that $\{(S_r\mathbf{f})\chi_{B(x_0,R)}\}_{\mathbf{f}\in \mathcal{F}}$ has a finite $\frac{\epsilon}{3}$-net.\par
Next, we turn to verify that $\{S_r\mathbf{f}\}_{\mathbf{f}\in \mathcal{F}}$ is pointwise bounded and equicontinuous on $\bar{B}(x_0,R)$, where $$\bar{B}(x_0,R):=\{y\in X:d(x_0,y)\leq R\}$$
is a closed bounded subset of $X$, and hence is compact by Definition \ref{df-proper}. From Lemma \ref{lem-John}, we have the following estimate for any $\mathbf{f}\in L^p(\rho,\mu)$,
\begin{align}\label{eq-6}
&\int_{B(x_0,2R)}|\mathbf{f}(y)|d\mu(y)\notag\\
&\quad =\int_{B(x_0,2R)}|W_y^{-1}W_y\mathbf{f}(y)|d\mu(y)\leq \int_{B(x_0,2R)}\|W_y^{-1}\|_{op}|W_y\mathbf{f}(y)|d\mu(y)\notag\\
&\quad\leq \bigg(\int_{B(x_0,2R)}\|W_y^{-1}\|^{p'}_{op}d\mu(y)\bigg)^{\frac{1}{p'}}
\bigg(\int_{B(x_0,2R)}|W_y\mathbf{f}(y)|^pd\mu(y)\bigg)^{\frac{1}{p}}\notag\\
&\quad\approx \bigg(\int_{B(x_0,2R)}\|W_y^{-1}\|^{p'}_{op}d\mu(y)\bigg)^{\frac{1}{p'}}
\bigg(\int_{B(x_0,2R)}\big[\rho_y(\mathbf{f}(y))\big]^pd\mu(y)\bigg)^{\frac{1}{p}}\notag\\
&\quad\lesssim \bigg(\int_{B(x_0,2R)}\|W_y^{-1}\|^{p'}_{op}d\mu(y)
\bigg)^{\frac{1}{p'}}\|\mathbf{f}\|_{L^p(\rho,\mu)}.\tag{4.3}
\end{align}
It follows that for any $\mathbf{f}\in \mathcal{F}$ and any fixed $x\in \bar{B}(x_0,R)$,
\begin{align}\label{eq-7}
|S_r\mathbf{f}(x)|&\lesssim \frac{1}{\mu[B(x,r)]}\int_{B(x,r)}|\mathbf{f}(y)|d\mu(y)\notag\\
&\lesssim \frac{1}{\mu[B(x,r)]}\int_{B(x_0,2R)}|\mathbf{f}(y)|d\mu(y)\notag\\
&\lesssim \frac{\sup\limits_{\mathbf{f}\in \mathcal{F}}\|\mathbf{f}\|_{L^p(\rho,\mu)}}{\mu[B(x,r)]}\bigg(\int_{B(x_0,2R)}\|W_y^{-1}\|^{p'}_{op}d\mu(y)\bigg)^{\frac{1}{p'}},\tag{4.4}
\end{align}
where we use the fact that $B(x,r)\subset B(x_0,2R)$ when $x\in \bar{B}(x_0,R)$. Since $\|W^{-1}_y\|^{p'}_{op}\in L^1_{\rm{loc}}(X,d,\mu)$, then by condition $(a)$, we obtain that $\{S_r\mathbf{f}\}_{\mathbf{f}\in \mathcal{F}}$ is pointwise bounded on $\bar{B}(x_0,R)$.\par

Furthermore, for any $\mathbf{f}\in \mathcal{F}$ and any fixed $x\in \bar{B}(x_0,R)$, by \eqref{eq-7}, we have the following estimate that for any $y\in \bar{B}(x_0,R)$,
\begin{align}\label{eq-8}
&|S_r\mathbf{f}(y)-S_r\mathbf{f}(x)|\notag\\
&\quad\leq \bigg|\frac{1}{\mu[B(y,r)]}\int_{B(y,r)}\mathbf{f}(z)d\mu(z)-\frac{1}{\mu[B(x,r)]}\int_{B(y,r)}\mathbf{f}(z)d\mu(z)\bigg|\notag\\
&\qquad+ \bigg|\frac{1}{\mu[B(x,r)]}\int_{B(y,r)}\mathbf{f}(z)d\mu(z)-\frac{1}{\mu[B(x,r)]}\int_{B(x,r)}\mathbf{f}(z)d\mu(z)\bigg|\notag\\
&\quad\lesssim \frac{\mu[B(x,r)\Delta B(y,r)]}{\mu[B(x,r)]\mu[B(y,r)]}\int_{B(y,r)}|\mathbf{f}(z)|d\mu(z)+ \frac{1}{\mu[B(x,r)]}\int_{B(x,r)\Delta B(y,r)}|\mathbf{f}(z)|d\mu(z)\notag\\
&\quad\lesssim \frac{\mu[B(x,r)\Delta B(y,r)]}{\mu[B(x,r)]\mu[B(y,r)]}\int_{B(x_0,2R)}|\mathbf{f}(z)|d\mu(z)+ \frac{1}{\mu[B(x,r)]}\int_{B(x,r)\Delta B(y,r)}|\mathbf{f}(z)|d\mu(z)\notag\\
&\quad\lesssim \frac{\mu[B(x,r)\Delta B(y,r)]}{\mu[B(x,r)]\mu[B(y,r)]}\sup_{\mathbf{f}\in \mathcal{F}}\|\mathbf{f}\|_{L^p(\rho,\mu)}\bigg(\int_{B(x_0,2R)}\|W_z^{-1}\|^{p'}_{op}d\mu(z)\bigg)^{\frac{1}{p'}}\notag\\
&\qquad+ \frac{1}{\mu[B(x,r)]}\int_{B(x,r)\Delta B(y,r)}|\mathbf{f}(z)|d\mu(z).\tag{4.5}
\end{align}
Since $\mu$ is metrically continuous, then by \eqref{eq-6}, \eqref{eq-7} and Lemma \ref{lem-mcmlup}, for any $0<h\leq r$, $x,y\in \bar{B}(x_0,R)$ and $\mathbf{f}\in \mathcal{F}$, a direct calculation yields that
\begin{align}\label{eq-9}
&\int_{B(x,r)\Delta B(y,r)}|\mathbf{f}(z)|d\mu(z)\notag\\
&\quad\leq \int_{B(x,r)\Delta B(y,r)}|\mathbf{f}(z)-S_h\mathbf{f}(z)|d\mu(z)+\int_{B(x,r)\Delta B(y,r)}|S_h\mathbf{f}(z)|d\mu(z)\notag\\
&\quad\leq \int_{B(x_0,2R)}|\mathbf{f}(z)-S_h\mathbf{f}(z)|d\mu(z)+\int_{B(x,r)\Delta B(y,r)}|S_h\mathbf{f}(z)|d\mu(z)\notag\\
&\quad\lesssim \bigg(\int_{B(x_0,2R)}\|W_z^{-1}\|^{p'}_{op}d\mu(z)\bigg)^{\frac{1}{p'}}\sup_{\mathbf{f}\in \mathcal{F}}\|S_h\mathbf{f}-\mathbf{f}\|_{L^p(\rho,\mu)}\notag\\
&\qquad+ \frac{\mu[B(x,r)\Delta B(y,r)]}{\inf\limits_{z\in \bar{B}(x_0,2R)}\mu[B(z,h)]}\bigg(\int_{B(x_0,3R)}\|W_z^{-1}\|^{p'}_{op}d\mu(z)\bigg)^{\frac{1}{p'}}\sup_{\mathbf{f}\in \mathcal{F}}\|\mathbf{f}\|_{L^p(\rho,\mu)}\tag{4.6},
\end{align}
where we use the fact that $B(z,h)\subset B(x_0,3R)$ when $z\in B(x_0,2R)$, and implicit constants depend only on $d$. Then by the arbitrariness of $h$, condition $(c)$, \eqref{eq-8} and \eqref{eq-9}, we obtain that $\{S_r\mathbf{f}\}_{\mathbf{f}\in \mathcal{F}}$ is equicontinuous on $\bar{B}(x_0,R)$.\par

So from Lemma \ref{lem-AA} we conclude that $\{S_r\mathbf{f}\}_{\mathbf{f}\in \mathcal{F}}$ is totally bounded in $C(\bar{B}(x_0,r),\mathbb{C}^d)$. It follows that there exists $\{\mathbf{f}_k\}^N_{k=1}\subset\mathcal{F}$ such that $\{S_r\mathbf{f}_k\}^N_{k=1}$ is an $\frac{\epsilon}{A}$-net of $\{S_r\mathbf{f}\}_{\mathbf{f}\in \mathcal{F}}$ for given $\epsilon$, where \begin{small}$A:=3\big(\int_{B(x_0,R)}\|W_x\|^p_{op}d\mu(x)\big)^{\frac{1}{p}}$\end{small}.\par

Hereafter, we shall show that $\{(S_r\mathbf{f}_k)\chi_{B(x_0,R)}\}^N_{k=1}$ is a finite $\frac{\epsilon}{3}$-net of $\{(S_r\mathbf{f})\chi_{B(x_0,R)}\}_{\mathbf{f}\in \mathcal{F}}$ in $L^p(\rho,\mu)$. Note that by Lemma \ref{lem-John},
\begin{align*}
&\|(S_r\mathbf{f})\chi_{B(x_0,R)}-(S_r\mathbf{f}_k)\chi_{B(x_0,R)}\|_{L^{p}(\rho,\mu)}\\
&\quad\leq \bigg(\int_{B(x_0,R)}|W_x(S_r\mathbf{f}(x)-S_r\mathbf{f}_k(x))|^p d\mu(x)\bigg)^{\frac{1}{p}}\\
&\quad\leq \bigg(\int_{B(x_0,R)}\|W_x\|^p_{op}|S_r\mathbf{f}(x)-S_r\mathbf{f}_k(x)|^p d\mu(x)\bigg)^{\frac{1}{p}}\\
&\quad\leq \bigg(\int_{B(x_0,R)}\|W_x\|^p_{op}d\mu(x)\bigg)^{\frac{1}{p}} \sup_{x\in \bar{B}(x_0,R)}|S_r\mathbf{f}(x)-S_r\mathbf{f}_k(x)|< \frac{\epsilon}{3},
\end{align*}
which finishes the proof of Theorem \ref{thm-KRrho}.
\end{proof}

\begin{rk}\label{rk-KRrho}
$(\rm{i})$ By using the same argument in Theorem \ref{thm-KRrho} with some minor changes, one can prove that Theorem \ref{thm-KRrho} also holds for $p=1$ under the additional assumptions that $W_x$ is an invertible matrix weight satisfying \eqref{eq-2.3} for every $x\in X$ and $\|W^{-1}_x\|_{op}\in L^{\infty}_{\rm{loc}}(X,d,\mu).$\par

$(\rm{ii})$ The assumption on $\rho_x$ in Theorem \ref{thm-KRrho} is necessary for our method. Since our method relies on the structure of Banach function spaces (see \cite[Definition 2.1]{GR}), although matrix weighted Lebesgue spaces are not Banach lattices (see \cite{CGO}). Moreover, Tsuji's method is invalid here to relax the range of the exponent $p\in (0,\infty)$.
\end{rk}

As an application, we obtain the following compactness criterion in matrix weighted Lebesgue spaces on $(\mathbb{R}^n,|\cdot|,\mu)$ with a metrically continuous measure $\mu$, by applying Theorem \ref{thm-KRrho} with $\rho_x(\cdot):=|W^{\frac{1}{p}}(x)\cdot|$. This extends the corresponding results of
\cite[Theorem 5]{CC}, \cite[Lemma 4.1]{XYY} and \cite[Proposition 2.9]{COY}.
\begin{cor}\label{cor-mwKR}
Let $1\leq p<\infty$, $(\mathbb{R}^n,|\cdot|,\mu)$ be the Euclidean metric measure space with a metrically continuous measure $\mu$. Assume that $W$ is an invertible matrix weight satisfying
\begin{enumerate}
\item[$(i)$] $\|W^{-1}\|_{op}\in L^{\infty}_{\rm{loc}}(\mathbb{R}^n,|\cdot|,\mu)$ when $p=1$;
\item[$(ii)$] $\|W^{-1}\|^{\frac{p'}{p}}_{op}\in L^1_{\rm{loc}}(\mathbb{R}^n,|\cdot|,\mu)$ when $p\in(1,\infty)$.
\end{enumerate}
Define $$\|\mathbf{f}\|^p_{L^p(W,\mu)}:=\int_{\mathbb{R}^n}\big|W^{\frac{1}{p}}(x)\mathbf{f}(x)\big|^pd\mu(x).$$
A subset $\mathcal{F}$ of $L^p(W,\mu)$ is totally bounded if the following conditions are valid:
\begin{enumerate}
\item[$(a)$] $\mathcal{F}$ is bounded, i.e. $\sup\limits_{\mathbf{f}\in \mathcal{F}}\|\mathbf{f}\|_{L^p(W,\mu)}<\infty;$
\item[$(b)$] $\mathcal{F}$ uniformly vanishes at infinity, that is, $$\lim_{R\to \infty}\sup_{\mathbf{f}\in \mathcal{F}}\|\mathbf{f}\chi_{B^c(0,R)}\|_{L^p(W,\mu)}=0;$$
\item[$(c)$] $\mathcal{F}$ is equicontinuous, that is, $$\lim_{r\to 0}\sup_{\mathbf{f}\in \mathcal{F}}\|S_r\mathbf{f}-\mathbf{f}\|_{L^p(W,\mu)}=0.$$
\end{enumerate}
\end{cor}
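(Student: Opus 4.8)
The plan is to deduce this from Theorem~\ref{thm-KRrho} (together with Remark~\ref{rk-KRrho}(i) for the endpoint $p=1$) via the substitution $\rho_x(\cdot):=|W^{\frac1p}(x)\cdot|$. First I would record that $(\mathbb R^n,|\cdot|)$ is proper in the sense of Definition~\ref{df-proper}, since by the Heine--Borel theorem every closed bounded subset of $\mathbb R^n$ is compact; thus the ambient hypotheses of Theorem~\ref{thm-KRrho} are satisfied once $\mu$ is assumed metrically continuous. Because $W$ is invertible, $W^{\frac1p}(x)$ is a positive-definite self-adjoint matrix for every $x$, so $\rho_x(\mathbf v)=|W^{\frac1p}(x)\mathbf v|$ is genuinely a norm on $\mathbb C^d$, and by construction
\[
\|\mathbf f\|_{L^p(\rho,\mu)}^p=\int_{\mathbb R^n}\big[\rho_x(\mathbf f(x))\big]^p\,d\mu(x)=\int_{\mathbb R^n}\big|W^{\frac1p}(x)\mathbf f(x)\big|^p\,d\mu(x)=\|\mathbf f\|_{L^p(W,\mu)}^p .
\]
Hence $L^p(\rho,\mu)=L^p(W,\mu)$ with identical norms, and the three conditions $(a)$, $(b)$, $(c)$ in the statement are literally conditions $(a)$, $(b)$, $(c)$ of Theorem~\ref{thm-KRrho}; no reformulation of them is needed.

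Next I would supply the matrix $W_x$ demanded by Theorem~\ref{thm-KRrho}. Rather than appeal to the John ellipsoid theorem (Lemma~\ref{lem-John}), the cleanest choice is $W_x:=W^{\frac1p}(x)$ itself: then $|W_x\mathbf v|=\rho_x(\mathbf v)$, so \eqref{eq-2.3} holds with the left-hand inequality an equality and the right-hand one trivially valid since $d^{\frac12}\geq1$. It then remains to verify the two local integrability requirements. Using the eigenvalue identities \eqref{eq-2.1} and \eqref{eq-2.2}, with $\lambda_i$ the eigenvalue functions of $W$, one computes
\[
\|W_x\|_{op}^p=\big(\max_i\lambda_i^{\frac1p}(x)\big)^p=\max_i\lambda_i(x)=\|W(x)\|_{op},
\]
which lies in $L^1_{\rm loc}$ precisely because $W$ is a matrix weight, and likewise
\[
\|W_x^{-1}\|_{op}^{p'}=\|W^{-\frac1p}(x)\|_{op}^{p'}=\big(\min_i\lambda_i(x)\big)^{-\frac{p'}{p}}=\|W^{-1}(x)\|_{op}^{\frac{p'}{p}},
\]
so the hypothesis $\|W_x^{-1}\|_{op}^{p'}\in L^1_{\rm loc}$ is exactly assumption $(ii)$ in the correct range $p\in(1,\infty)$.

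With these verifications in hand, Theorem~\ref{thm-KRrho} applies directly for $p\in(1,\infty)$ and yields that $\mathcal F$ is totally bounded. For the endpoint $p=1$ I would instead invoke Remark~\ref{rk-KRrho}(i): there $W_x=W(x)$, and the required condition $\|W_x^{-1}\|_{op}=\|W^{-1}\|_{op}\in L^{\infty}_{\rm loc}$ is precisely assumption $(i)$. I do not expect a genuine obstacle, since the analytic content—reduction to the average operator $S_r$, the pointwise boundedness and equicontinuity of $\{S_r\mathbf f\}_{\mathbf f\in\mathcal F}$ on $\bar B(x_0,R)$, and the Arzel\'a--Ascoli step (Lemma~\ref{lem-AA})—was already carried out in the proof of Theorem~\ref{thm-KRrho}. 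The only point requiring care is the bookkeeping of the eigenvalue identities above, so that the abstract hypotheses are matched to $(i)$ and $(ii)$ according to whether $p=1$ or $p\in(1,\infty)$.
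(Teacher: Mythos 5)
Your proposal is correct and follows exactly the route the paper intends: the paper's entire justification of Corollary~\ref{cor-mwKR} is the single remark that one applies Theorem~\ref{thm-KRrho} (and, implicitly, Remark~\ref{rk-KRrho}(i) for $p=1$) with $\rho_x(\cdot):=|W^{\frac1p}(x)\cdot|$, and your choice $W_x:=W^{\frac1p}(x)$ together with the eigenvalue identities \eqref{eq-2.1}--\eqref{eq-2.2} is precisely the bookkeeping that makes hypotheses $(i)$ and $(ii)$ match the integrability assumptions of that theorem. You have simply written out the verification the paper leaves implicit.
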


Clearly, by Definition \ref{df-mw}, if $W$ is a matrix $A_p$ weight for $p\in(1,\infty)$, then $W$ satisfies the assumption in Corollary \ref{cor-mwKR} on $(\mathbb{R}^n,|\cdot|,m)$.\par

\section{A characterization of compactness on $\mathbb{R}^n$}\label{s5}
In this section, we give a necessary and sufficient condition for total boundedness of subsets in matrix weighted Lebesgue spaces. Before that, we need some lemmas.\par

For any $0<r<\infty$, let $S_r$ be the average operator on $\mathbb{R}^n$ defined by $$S_r\mathbf{f}(x):=\frac{1}{|B(x,r)|}\int_{B(x,r)}\mathbf{f}(y)\ dy,\ \forall\ \mathbf{f}\in L^p(W).$$ Then we have the following useful lemma.
\begin{lem}\label{lem-nKR}
Let $1<p<\infty$. If $W\in A_p$, then
$$\|S_r\mathbf{f}\|_{L^p(W)}\lesssim \|\mathbf{f}\|_{L^p(W)},\ \forall\ \mathbf{f}\in L^p(W),$$
where the implicit constant depends only on $p$ and $d$.
\end{lem}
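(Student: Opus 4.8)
The plan is to dominate the average operator $S_r$ pointwise by the Christ-Goldberg maximal operator $M_{\omega}$ and then invoke the strong-type bound of Lemma \ref{lem-CGm}. The crucial feature is that the resulting estimate is automatically uniform in $r$, because for each $x$ the ball $B(x,r)$ contains $x$ and is therefore one of the balls appearing in the supremum defining $M_{\omega}$.

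First I would set $\mathbf{g}:=W^{\frac{1}{p}}\mathbf{f}$, so that $\mathbf{f}=W^{-\frac{1}{p}}\mathbf{g}$ (recall that $W\in A_p$ is invertible) and $\|\mathbf{g}\|_{L^p(\mathbb{R}^n,\mathbb{C}^d)}=\|\mathbf{f}\|_{L^p(W)}$. Pulling the fixed matrix $W^{\frac{1}{p}}(x)$ inside the average and applying the triangle inequality for the vector norm gives, for a.e. $x\in\mathbb{R}^n$,
\begin{align*}
\big|W^{\frac{1}{p}}(x)S_r\mathbf{f}(x)\big|
&=\bigg|\frac{1}{|B(x,r)|}\int_{B(x,r)}W^{\frac{1}{p}}(x)W^{-\frac{1}{p}}(y)\mathbf{g}(y)\,dy\bigg|\\
&\leq \frac{1}{|B(x,r)|}\int_{B(x,r)}\big|W^{\frac{1}{p}}(x)W^{-\frac{1}{p}}(y)\mathbf{g}(y)\big|\,dy
\leq M_{\omega}\mathbf{g}(x),
\end{align*}
where the last inequality holds precisely because $x\in B(x,r)$, so $B(x,r)$ is admissible in the supremum defining $M_{\omega}\mathbf{g}(x)$.

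Raising to the $p$-th power and integrating, this pointwise bound yields
$$\|S_r\mathbf{f}\|_{L^p(W)}^p=\int_{\mathbb{R}^n}\big|W^{\frac{1}{p}}(x)S_r\mathbf{f}(x)\big|^p\,dx\leq \int_{\mathbb{R}^n}\big(M_{\omega}\mathbf{g}(x)\big)^p\,dx=\|M_{\omega}\mathbf{g}\|_{L^p(\mathbb{R}^n)}^p.$$
Applying Lemma \ref{lem-CGm} with $q=p$ (permissible since $W\in A_p$) gives $\|M_{\omega}\mathbf{g}\|_{L^p(\mathbb{R}^n)}\lesssim \|\mathbf{g}\|_{L^p(\mathbb{R}^n,\mathbb{C}^d)}=\|\mathbf{f}\|_{L^p(W)}$, with implicit constant depending only on $p$ and $d$, which is the desired conclusion.

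Since every step is a direct substitution, a triangle inequality, or an application of the already-established maximal bound, I do not anticipate a serious obstacle. The only point that demands care is confirming the uniformity in $r$, and this comes for free: because $M_{\omega}$ takes the supremum over all balls containing $x$, the single average over $B(x,r)$ never exceeds it, so the final constant carries no dependence on $r$.
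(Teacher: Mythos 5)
Your proof is correct and follows essentially the same route as the paper: both dominate $|W^{1/p}(x)S_r\mathbf{f}(x)|$ pointwise by $M_{\omega}(W^{1/p}\mathbf{f})(x)$ via the insertion $W^{1/p}(x)\mathbf{f}(y)=W^{1/p}(x)W^{-1/p}(y)W^{1/p}(y)\mathbf{f}(y)$ and then apply the strong-type bound of Lemma \ref{lem-CGm} at $q=p$. Your explicit remark that the bound is uniform in $r$ because $B(x,r)$ is one of the balls in the supremum is a nice clarification that the paper leaves implicit.
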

\begin{proof}
Note that for any $\mathbf{f}\in L^p(W)$ and $0<r<\infty$,
\begin{align}\label{eq-10}
\bigg|\frac{W^{\frac{1}{p}}(x)}{|B(x,r)|}\int_{B(x,r)}\mathbf{f}(y)dy\bigg|&\lesssim \frac{1}{|B(x,r)|}\int_{B(x,r)}\big|W^{\frac{1}{p}}(x)\mathbf{f}(y)\big|dy\notag\\
&\approx \frac{1}{|B(x,r)|}\int_{B(x,r)}\big|W^{\frac{1}{p}}(x)W^{-\frac{1}{p}}(y)W^{\frac{1}{p}}(y)\mathbf{f}(y)\big|dy\notag\\
&\lesssim M_w\big(W^{\frac{1}{p}}\mathbf{f}\big)(x)\tag{5.1}.
\end{align}
Then from \eqref{eq-10} and Lemma \ref{lem-CGm}, it follows that
\begin{align*}
\|S_r\mathbf{f}\|_{L^p(W)}&\lesssim \|M_{\omega}\big(W^{\frac{1}{p}}\mathbf{f}\big)\|_{L^p(\mathbb{R}^n)}\lesssim \|W^{\frac{1}{p}}\mathbf{f}\|_{L^p(\mathbb{R}^n,\mathbb{C}^d)}\approx\|\mathbf{f}\|_{L^p(W)}.
\end{align*}
This completes the proof of Lemma \ref{lem-nKR}.
\end{proof}\par

The following is a vector-valued extension of the Lebesgue differentiation theorem in the setting of matrix weights.
\begin{lem}\label{lem-ldd}
Let $1<p<\infty$. If $W\in A_p$, then for any $\mathbf{f}\in L^p(W)$,
$$\lim_{r\to 0}|S_r\mathbf{f}(x)-\mathbf{f}(x)|=0\quad \text{a.e.}\ x\in \mathbb{R}^n.$$
\end{lem}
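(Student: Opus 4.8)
The plan is to run the classical density-plus-maximal-function proof of the Lebesgue differentiation theorem, transplanted to the matrix setting. Since $W\in A_p$ is invertible, $W^{\frac1p}(x)$ is a finite, invertible matrix for a.e. $x$, and hence $|S_r\mathbf f(x)-\mathbf f(x)|\to0$ at such a point if and only if $|W^{\frac1p}(x)(S_r\mathbf f(x)-\mathbf f(x))|\to0$; so it suffices to prove that the weighted oscillation
$$\Theta\mathbf f(x):=\limsup_{r\to0}\Big|W^{\frac1p}(x)\big(S_r\mathbf f(x)-\mathbf f(x)\big)\Big|$$
vanishes a.e. Here $S_r\mathbf f(x)$ is finite for every $x$ and $r$: as $W\in A_p$ forces $\|W^{-1}\|_{op}^{p'/p}\in L^1_{\rm loc}$, Hölder's inequality bounds $\int_{B(x,r)}|\mathbf f|$ by a local $L^{p'}$ norm of $\|W^{-\frac1p}\|_{op}$ times $\|\mathbf f\|_{L^p(W)}$, just as in the proof of Lemma \ref{lem-nKR}.

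Fix $\mathbf f\in L^p(W)$ and split $\mathbf f=\mathbf g+\mathbf h$ with $\mathbf g$ continuous and compactly supported. Subadditivity of $\limsup$ and of the norm gives $\Theta\mathbf f\le\Theta\mathbf g+\Theta\mathbf h$ a.e. For the continuous piece, $S_r\mathbf g(x)\to\mathbf g(x)$ for every $x$ by continuity, so $\Theta\mathbf g(x)=0$ wherever $\|W^{\frac1p}(x)\|_{op}<\infty$, i.e. a.e. For the remainder, estimate \eqref{eq-10} (uniform in $r$) gives $|W^{\frac1p}(x)S_r\mathbf h(x)|\lesssim M_{\omega}(W^{\frac1p}\mathbf h)(x)$, whence
$$\Theta\mathbf h(x)\le\sup_{r>0}\Big|W^{\frac1p}(x)S_r\mathbf h(x)\Big|+\Big|W^{\frac1p}(x)\mathbf h(x)\Big|\lesssim M_{\omega}\big(W^{\frac1p}\mathbf h\big)(x)+\Big|W^{\frac1p}(x)\mathbf h(x)\Big|.$$

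Now I would estimate the bad set. Fix $\lambda>0$; since $\Theta\mathbf g=0$ a.e., the set $\{\Theta\mathbf f>\lambda\}$ is contained, up to a null set, in $\{CM_{\omega}(W^{\frac1p}\mathbf h)>\lambda/2\}\cup\{|W^{\frac1p}\mathbf h|>\lambda/2\}$. The strong $(p,p)$ bound of Lemma \ref{lem-CGm} (with $q=p$) gives the corresponding weak-type estimate for the first set, and Chebyshev's inequality handles the second, so
$$\big|\{x:\Theta\mathbf f(x)>\lambda\}\big|\lesssim\lambda^{-p}\big\|W^{\frac1p}\mathbf h\big\|_{L^p(\mathbb R^n,\mathbb C^d)}^p=\lambda^{-p}\|\mathbf h\|_{L^p(W)}^p.$$
If continuous compactly supported functions are dense in $L^p(W)$, then $\|\mathbf h\|_{L^p(W)}=\|\mathbf f-\mathbf g\|_{L^p(W)}$ can be made arbitrarily small, forcing $|\{\Theta\mathbf f>\lambda\}|=0$ for each $\lambda>0$; taking the union over $\lambda=1/k$ yields $\Theta\mathbf f=0$ a.e., as desired.

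The main obstacle is exactly this density, since $L^p(W)$ is not a lattice and smooth functions are not obviously dense. I would argue in two steps. First, for $\mathbf f\in L^p(W)$ the truncations $\mathbf f\chi_{B(0,k)}\chi_{\{|\mathbf f|\le k\}}$ converge to $\mathbf f$ in $L^p(W)$ by dominated convergence, so bounded compactly supported functions are dense. Second, such an $\mathbf f_0$ lies in the scalar weighted space $L^p(v,\mathbb C^d)$ with $v:=\|W\|_{op}$, because $\int v|\mathbf f_0|^p\le\|\mathbf f_0\|_\infty^p\int_{\operatorname{supp}\mathbf f_0}\|W\|_{op}<\infty$; by Lemma \ref{lem-ms} $v$ is a scalar $A_p$ weight, hence a locally finite measure, so $\mathbf f_0$ is approximable in $L^p(v,\mathbb C^d)$ by continuous compactly supported functions. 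Since $\|W^{\frac1p}(\cdot)\|_{op}^p=\|W\|_{op}=v$ gives $\|\mathbf u\|_{L^p(W)}\le\|\mathbf u\|_{L^p(v,\mathbb C^d)}$ for every $\mathbf u$, any such $L^p(v)$-approximation is automatically an $L^p(W)$-approximation. The delicacy is that the reverse comparison fails and $\mathbf f$ itself need not lie in $L^p(v,\mathbb C^d)$; the truncation step is precisely what makes the transfer legitimate.
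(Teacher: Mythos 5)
Your proof is correct, but it takes a genuinely different route from the paper's. The paper's argument is a two-line reduction to the classical, unweighted Lebesgue differentiation theorem: from \eqref{eq-2.2} one gets the pointwise bound $|\mathbf f|^p\,\|W^{-1}\|_{op}^{-1}\le |W^{\frac1p}\mathbf f|^p$, Lemma \ref{lem-ms} says $\|W^{-1}\|_{op}^{-1}$ is a scalar $A_p$ weight (so that $\|W^{-1}\|_{op}^{p'/p}\in L^1_{\rm loc}$ and H\"older gives $|\mathbf f|\in L^1_{\rm loc}$), and then the classical theorem applied to each component $f_i$ finishes the job via $|S_r\mathbf f-\mathbf f|\le d^{1/2}\max_i|S_rf_i-f_i|$. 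You instead run the density-plus-maximal-function scheme for the \emph{weighted} oscillation $\limsup_{r\to0}|W^{\frac1p}(x)(S_r\mathbf f(x)-\mathbf f(x))|$, which costs you two nontrivial ingredients the paper avoids: the strong $(p,p)$ bound for the Christ--Goldberg maximal operator (Lemma \ref{lem-CGm}) and the density of $C_c$ in $L^p(W)$ — the latter being the delicate point in a non-lattice space, and your truncation-then-compare-with-$L^p(\|W\|_{op})$ argument handles it correctly (note that only $\|W\|_{op}\in L^1_{\rm loc}$ is needed there, not its $A_p$ property). What your approach buys is a strictly stronger conclusion, namely a.e.\ convergence of the $W^{\frac1p}(x)$-twisted averages, which is in fact the form ultimately exploited in the proof of Theorem \ref{thm-nec}; what the paper's approach buys is brevity and independence from the maximal-function machinery at this stage (it reappears only in the dominated-convergence step of Theorem \ref{thm-nec}). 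Both arguments are sound; yours essentially merges Lemma \ref{lem-ldd} with the way it is later used.
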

\begin{proof}
First, for any $\mathbf{f}:=(f_1,\cdots,f_d)^T\in L^p(W)$, it suffices to show that $f_i\in L^1_{\rm{loc}}(\mathbb{R}^n)$ for each $1\leq i\leq d$. Since $W\in A_p$, then by \eqref{eq-2.2}, $$|\mathbf{f}|^p=|W^{-\frac{1}{p}}W^{\frac{1}{p}}\mathbf{f}|^p\leq \|W^{-\frac{1}{p}}\|^p_{op}|W^{\frac{1}{p}}\mathbf{f}|^p=\|W^{-1}\|_{op}|W^{\frac{1}{p}}\mathbf{f}|^p.$$
It follows that $$|\mathbf{f}|^p\|W^{-1}\|^{-1}_{op}\leq |W^{\frac{1}{p}}\mathbf{f}|^p.$$
From Lemma \ref{lem-ms}, we conclude that $\|W^{-1}\|^{-1}_{op}$ is a scalar $A_p$ weight, and hence $|\mathbf{f}|\in L^1_{\rm{loc}}(\mathbb{R}^n)$, which implies that $f_i\in L^1_{\rm{loc}}(\mathbb{R}^n)$ for each $1\leq i\leq d$.\par

Hence, by the classical Lebesgue differentiation theorem, for each $1\leq i\leq d$, we have $$\lim_{r\to 0}|S_rf_i(x)-f_i(x)|=0\quad \text{a.e.}\ x\in \mathbb{R}^n.$$
Lemma \ref{lem-ldd} then follows from the fact that for any $x\in \mathbb{R}^n$,
\begin{equation*}
|S_r\mathbf{f}(x)-\mathbf{f}(x)|\leq d^{\frac{1}{2}}\max_{1\leq i\leq d}|S_rf_i(x)-f_i(x)|.\qedhere
\end{equation*}
\end{proof}\par

We now present a characterization for total boundedness of subsets in $L^p(W)$ when $W\in A_p$.
\begin{thm}\label{thm-nec}
Let $1<p<\infty$, and $W\in A_p$. A subset $\mathcal{F}$ of $L^p(W)$ is totally bounded if and only if the following conditions hold:
\begin{enumerate}
\item[$(a)$] $\mathcal{F}$ is bounded, i.e. $\sup\limits_{\mathbf{f}\in \mathcal{F}}\|\mathbf{f}\|_{L^p(W)}<\infty;$
\item[$(b)$] $\mathcal{F}$ uniformly vanishes at infinity, that is, $$\lim_{R\to \infty}\sup_{\mathbf{f}\in \mathcal{F}}\|\mathbf{f}\chi_{B^c(0,R)}\|_{L^p(W)}=0;$$
\item[$(c)$] $\mathcal{F}$ is equicontinuous, that is, $$\lim_{r\to 0}\sup_{\mathbf{f}\in \mathcal{F}}\|S_r\mathbf{f}-\mathbf{f}\|_{L^p(W)}=0.$$
\end{enumerate}
\end{thm}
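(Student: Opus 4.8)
The plan is to prove the two implications separately, the sufficiency being essentially already at hand and the necessity carrying the new content. For sufficiency, observe that since $W\in A_p$ with $p\in(1,\infty)$, Definition \ref{df-mw} gives $\|W^{-1}\|_{op}^{\frac{p'}{p}}\in L^1_{\rm loc}(\mathbb{R}^n)$, so $W$ meets the hypotheses of Corollary \ref{cor-mwKR} on $(\mathbb{R}^n,|\cdot|,m)$; as the Lebesgue measure $m$ is metrically continuous and $L^p(W,m)=L^p(W)$, conditions $(a)$--$(c)$ imply total boundedness directly by Corollary \ref{cor-mwKR}. Thus I would only need to establish the necessity, namely that a totally bounded $\mathcal{F}$ satisfies $(a)$--$(c)$.

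For the necessity, fix $\epsilon>0$ and a finite $\epsilon$-net $\{\mathbf{f}_1,\dots,\mathbf{f}_N\}\subset\mathcal{F}$. Condition $(a)$ is immediate, since for any $\mathbf{f}\in\mathcal{F}$ one has $\|\mathbf{f}\|_{L^p(W)}\le\epsilon+\max_k\|\mathbf{f}_k\|_{L^p(W)}$. For $(b)$ I would use that multiplication by a characteristic function is a contraction on $L^p(W)$, because $W^{\frac{1}{p}}(\mathbf{h}\chi_E)=(W^{\frac{1}{p}}\mathbf{h})\chi_E$. Since each $\mathbf{f}_k\in L^p(W)$, dominated convergence gives $\|\mathbf{f}_k\chi_{B^c(0,R)}\|_{L^p(W)}\to 0$, so a single $R$ works for all $N$ net elements; then for arbitrary $\mathbf{f}\in\mathcal{F}$ with $\|\mathbf{f}-\mathbf{f}_k\|_{L^p(W)}<\epsilon$ the contraction property yields $\|\mathbf{f}\chi_{B^c(0,R)}\|_{L^p(W)}\le\|\mathbf{f}-\mathbf{f}_k\|_{L^p(W)}+\|\mathbf{f}_k\chi_{B^c(0,R)}\|_{L^p(W)}<2\epsilon$, giving uniform vanishing at infinity.

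The crux is condition $(c)$. Using the same net together with the uniform bound $\|S_r(\mathbf{f}-\mathbf{f}_k)\|_{L^p(W)}\lesssim\|\mathbf{f}-\mathbf{f}_k\|_{L^p(W)}$ from Lemma \ref{lem-nKR}, the triangle inequality reduces matters to showing, for each fixed net element, that $\|S_r\mathbf{f}_k-\mathbf{f}_k\|_{L^p(W)}\to 0$ as $r\to0$; the finiteness of the net then upgrades this to the required uniform equicontinuity over $\mathcal{F}$. To prove the single-function convergence I would combine Lemma \ref{lem-ldd}, which gives $|W^{\frac{1}{p}}(x)(S_r\mathbf{f}_k(x)-\mathbf{f}_k(x))|\to0$ for a.e.\ $x$, with a dominated convergence argument: the estimate \eqref{eq-10} from the proof of Lemma \ref{lem-nKR} shows $|W^{\frac{1}{p}}(x)S_r\mathbf{f}_k(x)|\lesssim M_\omega(W^{\frac{1}{p}}\mathbf{f}_k)(x)$ with a bound independent of $r$, and by Lemma \ref{lem-CGm} the function $[M_\omega(W^{\frac{1}{p}}\mathbf{f}_k)]^p$ is integrable, so $2^{p-1}(C^p[M_\omega(W^{\frac{1}{p}}\mathbf{f}_k)]^p+|W^{\frac{1}{p}}\mathbf{f}_k|^p)$ serves as an $r$-independent integrable majorant for $|W^{\frac{1}{p}}(S_r\mathbf{f}_k-\mathbf{f}_k)|^p$. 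The main obstacle is precisely this last step: because the matrix weight does not commute with the average operator, one cannot dominate $S_r\mathbf{f}_k$ in $L^p(W)$ by elementary means, and it is the Christ--Goldberg maximal function acting as a uniform-in-$r$ dominator, through Lemma \ref{lem-nKR} and Lemma \ref{lem-CGm}, that makes the dominated convergence theorem applicable and closes the argument.
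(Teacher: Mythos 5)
Your proposal is correct and follows essentially the same route as the paper: sufficiency via Corollary \ref{cor-mwKR}, necessity via a finite $\epsilon$-net, with $(c)$ handled by reducing to the net elements through Lemma \ref{lem-nKR} and then applying Lemma \ref{lem-ldd} together with dominated convergence, using the Christ--Goldberg maximal function bound \eqref{eq-10} and Lemma \ref{lem-CGm} to produce the $r$-independent integrable majorant. The only cosmetic difference is that the paper invokes monotone rather than dominated convergence for $(b)$.
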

\begin{proof}
The sufficiency is due to Corollary \ref{cor-mwKR}. We now proof the necessity. Assume that $\mathcal{F}\subset L^p(W)$ is totally bounded. Then for any given $\epsilon>0$, there exists $\{\mathbf{f}_k\}^N_{k=1}\subset\mathcal{F}$ such that $\{\mathbf{f}_k\}^N_{k=1}$ is an $\epsilon$-net of $\mathcal{F}$, which implies that for any $\mathbf{f}\in \mathcal{F}$, there exists $\mathbf{f}_k$ such that $\|\mathbf{f}-\mathbf{f}_k\|_{L^p(W)}<\epsilon.$\par

Clearly, $(a)$ is true. As for $(b)$, for each $1\leq k\leq N$, since $\mathbf{f}_k\in L^p(W)$, by the monotone convergence theorem, there exists $R_k>0$ such that $\|\mathbf{f}_k\chi_{B^c(0,R_k)}\|_{L^p(W)}<\epsilon.$ Set $R:=\max\{R_k:1\leq k\leq N\}$. It follows that for given $\mathbf{f}\in \mathcal{F}$, $$\|\mathbf{f}\chi_{B^c(0,R)}\|_{L^p(W)}\leq \|\mathbf{f}-\mathbf{f}_k\|_{L^p(W)}+\|\mathbf{f}_k\chi_{B^c(0,R)}\|_{L^p(W)}<2\epsilon,$$
which implies $(b)$.\par

As for $(c)$, for each $1\leq k\leq N$, by $W\in A_p$, Lemma \ref{lem-CGm} and \eqref{eq-10},
\begin{align*}
\big|W^{\frac{1}{p}}(x)\big(S_r\mathbf{f}_k(x)-\mathbf{f}_k(x)\big)\big|&\leq \big|W^{\frac{1}{p}}(x)S_r\mathbf{f}_k(x)\big|+\big|W^{\frac{1}{p}}(x)\mathbf{f}_k(x)\big|\\
&\leq M_{\omega}\big(W^{\frac{1}{p}}\mathbf{f}_k\big)(x)+\big|W^{\frac{1}{p}}(x)\mathbf{f}_k(x)\big|\in L^p(\mathbb{R}^n).
\end{align*}
Then by Lemma \ref{lem-ldd} and Lebesgue's dominated convergence theorem, there exists $r>0$ such that for any $h\leq r$, $$\max_{1\leq k\leq N}\|S_h\mathbf{f}_k-\mathbf{f}_k\|_{L^p(W)}<\epsilon.$$
From Lemma \ref{lem-nKR}, it follows that
\begin{align*}
\|S_h\mathbf{f}-\mathbf{f}\|_{L^p(W)}&\leq \|S_h\mathbf{f}-S_h\mathbf{f}_k\|_{L^p(W)}+\|S_h\mathbf{f}_k-\mathbf{f}_k\|_{L^p(W)}+\|\mathbf{f}_k-\mathbf{f}\|_{L^p(W)}\\
&\lesssim \|\mathbf{f}_k-\mathbf{f}\|_{L^p(W)}+\|S_h\mathbf{f}_k-\mathbf{f}_k\|_{L^p(W)}\lesssim \epsilon,
\end{align*}
where implicit constants depend only on $p$ and $d$. This implies $(c)$ and completes the proof of Theorem \ref{thm-nec}.
\end{proof}

\section*{Acknowledgement}
Yang is supported by the National Natural Science Foundation of China (Grant Nos. 11971402 and 11871254). Zhuo is supported by the National Natural Science Foundation of China (Grant Nos. 1187110 and 11701174).


\end{document}